\theoremstyle{plain}
\newtheorem{theorem}{Theorem}[section]
\newtheorem*{theorem*}{Theorem}
\newtheorem{proposition}[theorem]{Proposition}
\newtheorem{lemma}[theorem]{Lemma}
\newtheorem{corollary}[theorem]{Corollary}
\theoremstyle{definition}
\newtheorem*{definition*}{Definition}
\newtheorem{remark}[theorem]{Remark}
\newtheorem{conjecture}[theorem]{Conjecture}
\numberwithin{equation}{section}
\newcommand{\B}{\mathbb{B}}
\newcommand{\C}{\mathbb{C}}
\newcommand{\D}{\mathbb{D}}
\newcommand{\N}{\mathbb{N}}
\newcommand{\Bloch}{\mathcal{B}}
\newcommand{\lilBloch}{\mathcal{B}_0}
\newcommand{\norm}[1]{\left|\left|#1\right|\right|}
\newcommand{\supnorm}[1]{\norm{#1}_\infty}
\newcommand{\blochnorm}[1]{\norm{#1}_\Bloch}
\renewcommand{\mod}[1]{\left|#1\right|}
\newcommand{\conj}[1]{\overline{#1}}
\newcommand{\Aut}{\mathrm{Aut}}
\newcommand{\distbd}[1]{\partial^*\hspace{-.4ex} #1}
\title[Multiplication Operators on $\Bloch(D)$]{Multiplication Operators on the Bloch Space\\ of Bounded Homogeneous Domains}
\author{Robert F.~Allen and Flavia Colonna}
\address{Department of Mathematical Sciences, George Mason University}
\email{rallen2@gmu.edu, fcolonna@gmu.edu}
\date{}
\subjclass[2000]{primary: 47B35; secondary: 32A18}
\keywords{Multiplication operators, Bloch space, Homogeneous domains}
\begin{document}

\begin{abstract} In this paper, we study the multiplication operators on the Bloch space of a bounded homogeneous domain in $\C^n$. Specifically, we characterize the bounded and the compact multiplication operators, establish estimates on the operator norm, and determine the spectrum. We prove that the only bounded multiplication operators on the Bloch space of the polydisk are those whose symbol is constant. Furthermore, we prove that for a large class of bounded symmetric domains, the isometric multiplication operators are those whose symbol is a constant of modulus one.
\end{abstract}

\maketitle

\section{Introduction}
In recent years, the operator theory of many functional Banach spaces that arise in complex function theory has been studied extensively.  Two particular important classes of operators are the multiplication operators and the composition operators, defined as $M_\psi f = \psi f$ and $C_\varphi f = f\circ\varphi$, respectively.  While a great deal is known about the composition operators on a wide variety of functional Banach spaces (see \cite{Shapiro:93} and \cite{CowenMacCluer:95}), in the literature the amount on the study of multiplication operators on some spaces has been surprisingly small.

Multiplication operators on the Bloch space of the open unit disk $\mathbb D$ have been studied in \cite{Arazy:82}, \cite{BrownShields:91}, \cite{OhnoZhao:01}, and \cite{AllenColonna:08}.  In this setting, criteria for the boundedness of multiplications operators were obtained independently by Arazy \cite{Arazy:82}, and by Brown and Shields \cite{BrownShields:91}.  More recently, Ohno and Zhao characterized the bounded and the compact weighted composition operators on the Bloch space and on its subspace given by the closure of the polynomials known as the {\it little Bloch space} \cite{OhnoZhao:01}.  As a corollary, they deduced that the only compact multiplication operators are those whose symbol is identically zero.  In \cite{AllenColonna:08}, the authors gave estimates on the operator norm, determined the spectrum, and showed that the only isometric multiplication operators are those whose symbol is a unimodular constant.

Bloch functions in higher dimensions have been introduced on bounded homogeneous domains in \cite{Hahn:75}. 
Krantz and Ma defined the notion of Bloch function on a strongly pseudoconvex domain  \cite{KrantzMa:88}.

The study of the multiplication operators on the Bloch space in higher dimensions was begun by Zhu, who characterized the bounded multiplication operators on the Bloch space and the little Bloch space of the unit ball $\B_n$ \cite{Zhu:04}.  In \cite{ZhouChen:05} and \cite{ZhouChen-I:05}, Zhou and Chen characterized the bounded and the compact weighted composition operators on the Bloch space of the unit ball and the unit polydisk, thereby obtaining corresponding results for the multiplication operators.

In this work, we carry out the study of the multiplication operators on the Bloch space of a bounded homogeneous domain and on a subspace we call the $*$-little Bloch space.  Specifically, we extend the results of Zhu, Zhao and Chen, and obtain operator norm estimates similar to those found for the case of the unit disk.  Furthermore, we determine the spectrum and show that for the the polydisk and for bounded symmetric domains that do not have the disk as a factor, the only isometric multiplication operators are those whose symbol is a constant function of modulus one.

\section{Preliminaries}\label{section:preliminaries}
A domain $D \subset \C^n$ is \emph{homogeneous} if the group $\Aut(D)$ of biholomorphic self-maps of $D$ (which we call \textit{automorphisms}) acts transitively on $D$.  An important subclass of homogeneous domains are the symmetric domains.  A domain $D \subset \C^n$ is \emph{symmetric at $z_0 \in D$} if there exists an involutive automorphism of $D$ for which $z_0$ is an isolated fixed point.  A domain $D$ is said to be \emph{symmetric} if $D$ is symmetric at each point.

Cartan \cite{Cartan:35} showed that every bounded symmetric domain in $\C^n$ is biholomorphic to a finite product of irreducible bounded symmetric domains, unique up to order.  Moreover, Cartan classified the irreducible bounded symmetric domains into six classes; four of the classes are referred to as \emph{Cartan classical domains}, whereas the other two, each consisting of a single domain, are referred to as the \emph{exceptional domains}.  A bounded symmetric domain written as such a product is said to be in \emph{standard form}.  The Cartan classical domains are defined as:
$$\begin{aligned}
R_I &= \{Z \in \mathcal{M}_{m,n}(\C) : I_m - ZZ^* > 0\}, \text{ for $m \geq n \geq 1$},\\
R_{II} &= \{Z \in \mathcal{M}_n(\C) : Z=Z^T, I_n - ZZ^* > 0\}, \text{ for $n \geq 1$},\\
R_{III} &= \{Z \in \mathcal{M}_n(\C) : Z = -Z^T, I_n - ZZ^* > 0\}, \text{ for $n \geq 2$},\\
R_{IV} &= \left\{z=(z_1,\dots,z_n) \in \C^n : A > 0, \norm{z}^2 < 1\right\}, \text{ for } 1 \leq n \neq 2,
\end{aligned}$$
where $\mathcal{M}_{m,n}(\C)$ denotes the set of $m\times n$ matrices with entries in $\C$, $\mathcal{M}_n(\C) = \mathcal{M}_{n,n}(\C)$, $Z^T$ is the transpose of $Z$, $A = \mod{\sum z_j^2}^2 + 1 - 2\norm{z}^2$, and $\norm{z}^2 = \sum \mod{z_j}^2$.  To assure these classes are disjoint, the dimensional restrictions $n \geq 2$ for domains in $R_{II}$ and $n \geq 5$ for domains in $R_{III}$ and $R_{IV}$ are imposed.  For a description of the exceptional domains we refer the reader to \cite{Drucker:78}.

\begin{remark} Every bounded symmetric domain is homogeneous and a bounded homogeneous domain that is symmetric at a single point is symmetric.  Each bounded homogeneous domain $D$ is endowed with a canonical metric invariant under the action of $\Aut(D)$, called the \textit{Bergman metric}.  The unit ball $\B_n$ and the unit polydisk $\D^n$ are examples of bounded symmetric domains in $\C^n$.  A description of the Bergman metric and the automorphism group of the Cartan classical domains can be found in \cite{Kobayashi:05}.  Cartan proved that every bounded homogeneous domain in dimensions 2 and 3 is symmetric.  An example of a homogeneous domain in dimension 4 that is not symmetric can be found in \cite{Pjat:59}.
\end{remark}

In \cite{Hahn:75} Hahn introduced the notion of Bloch function on a bounded homogeneous domain.  In \cite{Timoney:80-I} and \cite{Timoney:80-II} Timoney expanded the study of  Bloch functions in this setting.  We denote by $H(D)$ the set of holomorphic functions from $D \subset \C^n$ into $\C$ and by $H^\infty(D)$ the set of bounded functions in $H(D)$.

Let $D$ be a bounded homogeneous domain in $\C^n$ and $f \in H(D)$.  Then for $z \in D$, define \begin{equation}\label{equation:Q_f}Q_f(z) = \sup_{u \in \C^n\setminus\{0\}} \frac{\mod{\nabla( f)(z)u}}{H_z(u,\conj{u})^{1/2}}\end{equation} where for $u = (u_1,\dots,u_n) \in \C^n$, $\nabla(f)(z)u = \displaystyle\sum_{j=1}^n \frac{\partial f}{\partial z_j}(z)u_j$, and $H_z(\cdot,\cdot)$ is the Bergman metric of $D$. A function $f$ is called \emph{Bloch} if $$\beta_f = \sup_{z \in D}\;Q_f(z) < \infty.$$  It was shown by Timoney that every bounded holomorphic function on $D$ is Bloch (see \cite{Timoney:80-I}, Example 3.7(2)).  For a fixed $z_0 \in D$, the \emph{Bloch space} on $D$ based at $z_0$ is the Banach space $\Bloch(D)$ of all Bloch functions on $D$ under the norm $$\blochnorm{f} = \mod{f(z_0)} + \beta_f.$$  For convenience, we shall assume that $D$ contains the origin and choose 0 as the base point to define the norm.  Note that if $D = \D$, $Q_f(z) = (1-\mod{z}^2)\mod{f'(z)}.$

In \cite{Timoney:80-II} the little Bloch space on the unit ball was defined as
$$\lilBloch(\B_n) = \left\{f \in \Bloch(\B_n) : \lim_{\norm{z} \to 1^-} Q_f(z) = 0\right\}$$which is the closure of the polynomials in $\Bloch(\B_n)$. However, when $D$ is a bounded symmetric domain other than the ball, the only Bloch functions $f$ on $D$ such that $Q_f(z)\to 0$ as $z$ approaches the boundary $\partial D$ of $D$ are the constants, so the little Bloch space $\lilBloch(D)$ is defined as the closure of the polynomials.

The $*$-little Bloch space is defined as $$\Bloch_{0^*}(D) = \left\{f \in H(D) : \lim_{z \to \distbd{D}}\;Q_f(z) = 0\right\},$$ where $\distbd{D}$ denotes the distinguished boundary of $D$.  
If $D$ is the unit ball, then $\partial D = \distbd{D}$ and thus $\lilBloch(D) = \Bloch_{0^*}(D)$, while when $D\neq \B_n$, $\lilBloch(D)$ is a proper subspace of $\Bloch_{0^*}(D)$, and $\Bloch_{0^*}(D)$ is a non-separable subspace of $\Bloch(D)$.

\section{Boundedness}
Let $D$ be a bounded homogeneous domain in $\C^n$.  For $z \in D$, we define
$$\begin{aligned}
\omega(z) &= \sup_{f \in \Bloch(D)}\left\{\mod{f(z)} : f(0) = 0 \text{ and }\blochnorm{f} \leq 1\right\},\\
\omega_0(z) &= \sup_{f \in \Bloch_{0^*}(D)}\left\{\mod{f(z)} : f(0) = 0 \text{ and }\blochnorm{f} \leq 1\right\}.
\end{aligned}$$  In \cite{AllenColonna:07} it was shown that if $f \in \Bloch(D)$, then $$\beta_f = \sup_{z \neq w}\;\frac{\mod{f(z) - f(w)}}{\rho(z,w)},$$ where $\rho$ is the distance induced by the Bergman metric on $D$.  Consequently, if $f \in \Bloch(D)$ with $f(0) = 0$ and $\blochnorm{f} \leq 1$, then $\mod{f(z)} \leq \rho(z,0)$ for each $z\in D$.  Thus  \begin{equation}\label{omega_0_omega_rho}\omega_0(z) \leq \omega(z) \leq \rho(z,0),\end{equation} and hence $\omega_0(z)$ and $\omega(z)$ are finite for all $z \in D$.

\begin{lemma}\label{lemma:mod_f_inequality} Let $D$ be a bounded homogeneous domain in $\C^n$ and $z \in D$.
\begin{enumerate}
\item[(a)] If $f \in \Bloch(D)$, then $$\mod{f(z)} \leq \mod{f(0)} + \omega(z)\beta_f.$$
\item[(b)] If $f \in \Bloch_{0^*}(D)$, then $$\mod{f(z)} \leq \mod{f(0)} + \omega_0(z)\beta_f.$$
\end{enumerate}
\end{lemma}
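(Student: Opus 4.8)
The plan is to reduce both parts to the definitions of $\omega(z)$ and $\omega_0(z)$ by normalizing $f$ to have unit Bloch seminorm. First I would dispose of the degenerate case: if $\beta_f = 0$, then by the identity $\beta_f = \sup_{z \neq w} \mod{f(z) - f(w)}/\rho(z,w)$ recalled before the statement, $f$ is constant, so $\mod{f(z)} = \mod{f(0)}$ and the asserted inequality holds (with equality, since $\omega(z), \omega_0(z) \geq 0$). Hence from now on assume $\beta_f > 0$.

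For (a), set $g = (f - f(0))/\beta_f$. Subtracting a constant does not affect the gradient, so $\nabla(g)(z) = \beta_f^{-1}\nabla(f)(z)$, whence $Q_g(z) = \beta_f^{-1} Q_f(z)$ for every $z \in D$ and therefore $\beta_g = 1$. Since also $g(0) = 0$, we get $\blochnorm{g} = \mod{g(0)} + \beta_g = 1 \leq 1$, so $g$ is an admissible competitor in the supremum defining $\omega(z)$. This gives $\mod{g(z)} \leq \omega(z)$, i.e. $\mod{f(z) - f(0)} \leq \omega(z)\beta_f$, and the triangle inequality yields $\mod{f(z)} \leq \mod{f(0)} + \omega(z)\beta_f$.

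For (b), I would run the identical argument, adding the observation that $g \in \Bloch_{0^*}(D)$ whenever $f \in \Bloch_{0^*}(D)$: since $Q_g = \beta_f^{-1} Q_f$, the condition $Q_f(z) \to 0$ as $z \to \partial^* D$ is inherited by $g$. Thus $g$ is admissible in the supremum defining $\omega_0(z)$, giving $\mod{g(z)} \leq \omega_0(z)$, and the conclusion follows exactly as in (a).

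There is essentially no serious obstacle here; the only point requiring care is the degenerate case $\beta_f = 0$, which must be separated out because one cannot divide by $\beta_f$ there. Everything else is a direct unwinding of the definitions of $\omega$ and $\omega_0$ together with the invariance of $Q_f$ under adding a constant to $f$.
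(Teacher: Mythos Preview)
Your argument is correct and is essentially the paper's own proof: normalize to $g=(f-f(0))/\beta_f$, observe $g(0)=0$ and $\blochnorm{g}=1$, and read off $\mod{g(z)}\leq\omega(z)$ (respectively $\omega_0(z)$). Your treatment is slightly more explicit---you separate out the constant case and verify $g\in\Bloch_{0^*}(D)$ for part~(b)---but there is no substantive difference.
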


\begin{proof}  Suppose $f \in \Bloch(D)$ is not constant.  Then the function $g$ defined by $g(z) = \frac{1}{\beta_f}(f(z)-f(0))$ is holomorphic and such that $g(0) = 0$ and $\blochnorm{g} = 1$.  So $\mod{g(z)} \leq \omega(z)$ for all $z \in D$.  Thus, $$\mod{f(z)} \leq \mod{f(0)} + \mod{f(z)-f(0)} = \mod{f(0)} + \mod{g(z)}\beta_f \leq \mod{f(0)} + \omega(z)\beta_f,$$ for all $z \in D$. The result for $\Bloch_{0^*}(D)$ is analogous.\end{proof}

For $\psi \in H(D)$, define
$$\begin{aligned}
\sigma_\psi &= \sup_{z \in D}\;\omega(z)Q_\psi(z),\\
\sigma_{0,\psi} &= \sup_{z \in D}\;\omega_0(z)Q_\psi(z).
\end{aligned}$$

\begin{theorem}\label{theorem:bounded operator on bloch space} Let $D$ be a bounded homogeneous domain in $\C^n$ and $\psi \in H(D)$.
\begin{enumerate}
\item[(a)] $M_\psi$ is bounded on $\Bloch(D)$ if and only if $\psi \in H^\infty(D)$ and $\sigma_\psi < \infty$.
\item[(b)] $M_\psi$ is bounded on $\Bloch_{0^*}(D)$ if and only if $\psi \in H^\infty(D) \cap \Bloch_{0^*}(D)$ and $\sigma_{0,\psi} < \infty$.
\end{enumerate}
\end{theorem}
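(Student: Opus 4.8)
The plan rests on a single pointwise inequality. Since $\psi f$ is holomorphic with $\nabla(\psi f)(z) = \psi(z)\nabla(f)(z) + f(z)\nabla(\psi)(z)$, dividing by $H_z(u,\conj u)^{1/2}$ and taking the supremum over $u \in \C^n \setminus \{0\}$ in \eqref{equation:Q_f} gives
\begin{equation}\label{equation:prodQ}
Q_{\psi f}(z) \leq \mod{\psi(z)}\,Q_f(z) + \mod{f(z)}\,Q_\psi(z), \qquad z \in D.
\end{equation}
I would use throughout that, by the result of Timoney quoted above, $H^\infty(D) \subseteq \Bloch(D)$, so $\psi \in H^\infty(D)$ already forces $\beta_\psi < \infty$; also $1 \in \Bloch_{0^*}(D) \subseteq \Bloch(D)$.

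For sufficiency in (a), assume $\psi \in H^\infty(D)$ and $\sigma_\psi < \infty$ and let $f \in \Bloch(D)$. In \eqref{equation:prodQ} I would bound $\mod{\psi(z)} \leq \supnorm{\psi}$ and, by Lemma~\ref{lemma:mod_f_inequality}(a), $\mod{f(z)} \leq \mod{f(0)} + \omega(z)\beta_f$; taking the supremum over $z$ then yields $\beta_{\psi f} \leq \supnorm{\psi}\beta_f + \mod{f(0)}\beta_\psi + \sigma_\psi\beta_f$, and since $\mod{(\psi f)(0)} \leq \supnorm{\psi}\mod{f(0)}$ this gives $\blochnorm{\psi f} \leq \bigl(\supnorm{\psi} + \beta_\psi + \sigma_\psi\bigr)\blochnorm{f}$, so $M_\psi$ is bounded. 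Part (b) goes through verbatim with $\omega_0$, $\sigma_{0,\psi}$ and Lemma~\ref{lemma:mod_f_inequality}(b) in place of their counterparts, \emph{provided} one separately checks that $M_\psi$ sends $\Bloch_{0^*}(D)$ into itself; that verification is the step I expect to require the most care, and I discuss it last.

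For necessity, suppose $M_\psi$ is bounded on $\Bloch(D)$ with norm $C$. Then $\psi = M_\psi 1 \in \Bloch(D)$, and more generally $\psi^n = M_\psi^n 1 \in \Bloch(D)$ with $\blochnorm{\psi^n} \leq \norm{M_\psi^n}\blochnorm{1} \leq C^n$. Applying Lemma~\ref{lemma:mod_f_inequality}(a) to $\psi^n$ gives $\mod{\psi(z)}^n = \mod{\psi^n(z)} \leq \bigl(1 + \omega(z)\bigr)C^n$ for each fixed $z \in D$, and taking $n$-th roots and letting $n \to \infty$ (legitimate since $\omega(z) < \infty$ by \eqref{omega_0_omega_rho}) yields $\supnorm{\psi} \leq C$, so $\psi \in H^\infty(D)$. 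For the condition on $\sigma_\psi$, I would fix $w \in D$ with $\omega(w) > 0$, pick for small $\varepsilon > 0$ a near-extremal $f_w \in \Bloch(D)$ with $f_w(0) = 0$, $\blochnorm{f_w} \leq 1$ and $\mod{f_w(w)} > \omega(w) - \varepsilon$, and then rewrite $f_w(w)\nabla(\psi)(w) = \nabla(\psi f_w)(w) - \psi(w)\nabla(f_w)(w)$; the same manipulation that produced \eqref{equation:prodQ} now gives $\mod{f_w(w)}\,Q_\psi(w) \leq Q_{\psi f_w}(w) + \mod{\psi(w)}\,Q_{f_w}(w) \leq C + \supnorm{\psi} \leq 2C$, whence, letting $\varepsilon \to 0$, $\omega(w)Q_\psi(w) \leq 2C$; since this also holds trivially when $\omega(w) = 0$, we get $\sigma_\psi \leq 2C < \infty$. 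Part (b) is identical, now with $\omega_0$, Lemma~\ref{lemma:mod_f_inequality}(b) and test functions drawn from $\Bloch_{0^*}(D)$, and using that $\psi = M_\psi 1 \in \Bloch_{0^*}(D)$ because $1 \in \Bloch_{0^*}(D)$.

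The main obstacle, as flagged, is showing in (b) that $\psi \in H^\infty(D) \cap \Bloch_{0^*}(D)$ with $\sigma_{0,\psi} < \infty$ forces $\psi f \in \Bloch_{0^*}(D)$ for every $f \in \Bloch_{0^*}(D)$, i.e. $Q_{\psi f}(z) \to 0$ as $z \to \partial^* D$. In \eqref{equation:prodQ} the first summand is at most $\supnorm{\psi}Q_f(z)$, which tends to $0$ since $f \in \Bloch_{0^*}(D)$; the work is in the second summand, where I would write $\mod{f(z)}Q_\psi(z) = \bigl(\mod{f(z)}/\omega_0(z)\bigr)\bigl(\omega_0(z)Q_\psi(z)\bigr) \leq \bigl(\mod{f(z)}/\omega_0(z)\bigr)\sigma_{0,\psi}$ and then control $\mod{f(z)}/\omega_0(z)$ near the distinguished boundary. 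This last estimate reflects the growth of $*$-little Bloch functions relative to $\omega_0$, and should follow from $Q_f(z) \to 0$ as $z \to \partial^* D$ together with \eqref{omega_0_omega_rho} and the comparison estimates for $\omega_0$ from \cite{AllenColonna:07}; establishing that $\Bloch_{0^*}(D)$ is $M_\psi$-invariant is where the genuine content of (b) lies, the remaining inequalities being routine consequences of \eqref{equation:prodQ} and Lemma~\ref{lemma:mod_f_inequality}.
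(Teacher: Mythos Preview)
Your argument tracks the paper's closely: the product-rule inequality $Q_{\psi f}\le\mod{\psi}\,Q_f+\mod{f}\,Q_\psi$ together with Lemma~\ref{lemma:mod_f_inequality} drives both directions in (a) and the necessity in (b), and your estimates coincide with the paper's up to inessential constants. The one genuine methodological difference is how you obtain $\psi\in H^\infty(D)$ in the necessity direction. The paper simply invokes Lemma~11 of \cite{DurenRombergShields:69} to get $\supnorm{\psi}\le\norm{M_\psi}$; you instead iterate $M_\psi$ on the constant $1$, bound $\mod{\psi(z)}^n\le(1+\omega(z))\norm{M_\psi}^n$ via Lemma~\ref{lemma:mod_f_inequality}(a), and pass to the limit in $n$-th roots. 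Your route is self-contained and recovers the same inequality $\supnorm{\psi}\le\norm{M_\psi}$, so this is a legitimate alternative to the paper's citation.

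Where you diverge unhelpfully is in treating the $M_\psi$-invariance of $\Bloch_{0^*}(D)$ as the crux and proposing to control $\mod{f(z)}/\omega_0(z)$ near $\partial^*D$. That route would require a little-oh growth estimate for $\ast$-little Bloch functions which you do not prove and which is not supplied by the references you invoke. The paper avoids this detour entirely: it reuses the bound from Lemma~\ref{lemma:mod_f_inequality}(b) already in hand, writing
\[
Q_{\psi f}(z)\le\supnorm{\psi}Q_f(z)+\bigl(\mod{f(0)}+\omega_0(z)\beta_f\bigr)Q_\psi(z),
\]
and declares this tends to $0$ as $z\to\partial^*D$ from $Q_f(z)\to 0$, $Q_\psi(z)\to 0$, and the finiteness of $\omega_0(z)Q_\psi(z)\le\sigma_{0,\psi}$. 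So the step you flag as requiring the most care is, in the paper's treatment, dispatched in one line using exactly the same pointwise bound you already used for boundedness; no separate analysis of $\mod{f(z)}/\omega_0(z)$ is needed.
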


\begin{proof}  To prove (a), assume $\psi \in H^\infty(D)$ with $\sigma_\psi < \infty$, and let $f \in \Bloch(D)$.  Applying the product rule, for all $z \in D$ we have
\begin{equation}\label{equation:blochnorm inequality 1}Q_{\psi f}(z) \leq \mod{\psi(z)}Q_f(z) + \mod{f(z)}Q_\psi(z).\end{equation}  Since $\psi$ is bounded, using Lemma \ref{lemma:mod_f_inequality}(a), inequality (\ref{equation:blochnorm inequality 1}) yields $$\begin{aligned}\label{equation:operator inequality}\beta_{\psi f} &\leq \supnorm{\psi}\beta_f + \sup_{z \in D}\;(\mod{f(0)} + \omega(z)\beta_f)Q_\psi(z)\\ &\leq \supnorm{\psi}\beta_f + \mod{f(0)}\beta_\psi + \sigma_\psi\beta_f,\end{aligned}$$ which is finite.  Thus, $M_\psi f \in \Bloch(D)$ and \begin{equation}\label{equation:blochnorm_inequality_2}\blochnorm{M_\psi f} \leq \mod{f(0)}\blochnorm{\psi} + (\supnorm{\psi} + \sigma_\psi)\beta_f.\end{equation}  It follows immediately that $\blochnorm{M_\psi f} \leq \left(\blochnorm{\psi} + \supnorm{\psi} + \sigma_\psi\right)\blochnorm{f},$  proving that $M_\psi$ is bounded on $\Bloch(D)$.

Conversely, suppose $M_\psi$ is bounded on $\Bloch(D)$.  By Lemma 11 of \cite{DurenRombergShields:69}, $\psi \in H^\infty(D)$ with $\supnorm{\psi} \leq \norm{M_\psi}$.  Thus, it suffices to show that $\sigma_\psi$ is finite.  Let $f \in \Bloch(D)$.  Applying the product rule, for each $z \in D$ we obtain $$\begin{aligned}\mod{f(z)}Q_\psi(z) &\leq Q_{\psi f}(z) + \mod{\psi(z)}Q_f(z) \leq \blochnorm{M_\psi f} + \mod{\psi(z)}Q_f(z)\\
&\leq \left(\norm{M_\psi} + \mod{\psi(z)}\right)\blochnorm{f}.\end{aligned}$$  Taking the supremum over all Bloch functions $f$ such that $f(0) = 0$ and $\blochnorm{f} \leq 1$, we get
$\omega(z)Q_\psi(z) \leq \norm{M_\psi} + \mod{\psi(z)}.$  Finally, taking the supremum over all $z \in D$, we arrive at $\sigma_\psi \leq \norm{M_\psi} + \supnorm{\psi},$ which is finite.

Next, assume $\psi \in H^\infty(D) \cap \Bloch_{0^*}(D)$ and $\sigma_{0,\phi} < \infty$.  Then for $f \in \Bloch_{0^*}(D)$ and $z \in D$, we have
$Q_{\psi f}(z) \leq \supnorm{\psi}Q_f(z) + (\mod{f(0)} + \omega_0(z)\beta_f)Q_\psi (z) \to 0$ as $z \to \distbd{D}$.  The boundedness of $M_\psi$ on $\Bloch_{0^*}(D)$ follows by using the argument in the proof of part (a) and Lemma \ref{lemma:mod_f_inequality}(b).

Conversely, if $M_\psi$ is bounded on $\Bloch_{0^*}(D)$, then $\psi = M_\psi(1) \in \Bloch_{0^*}(D)$ since $1 \in \Bloch_{0^*}(D)$.  Arguing as in the proof of part (a), we see that $\sigma_{0,\psi} \leq \norm{M_\psi} + \supnorm{\psi},$ completing the proof.
\end{proof}

\subsection{Equivalence of Boundedness on $\Bloch(D)$ and $\Bloch_{0^*}(D)$}
We will now discuss conditions for which the boundedness of $M_\psi$ is equivalent on the Bloch space and $*$-little Bloch space.  We first prove that for the cases of the unit ball and polydisk, the boundedness of $M_\psi$ on $\Bloch(D)$ is equivalent to the boundedness on $\Bloch_{0^*}(D)$.  We then show sufficient conditions for which this is true on a general bounded homogeneous domain.

In the case of the unit ball, the quantities $\omega(z)$ and $\omega_0(z)$ are equal to the Bergman distance from 0 to $z$ for all $z \in \B_n$, and we have the explicit formula \begin{equation}\label{equation:omega_ball}\omega_0(z) = \omega(z) = \frac{1}{2}\log\frac{1+\norm{z}}{1-\norm{z}}\end{equation} (see \cite{Zhu:04}, Theorems 3.9 and 3.14).  In turn, this implies that $\sigma_{0,\psi} = \sigma_\psi$.  So, we have the following characterization of bounded multiplication operators on the Bloch space and little Bloch space of the unit ball.  Recall that for the unit ball the little Bloch space and the $*$-little Bloch space are the same.

\begin{corollary}\label{corollary:bounded_on_ball} Let $\psi \in H(\B_n)$.  Then the following are equivalent:
\begin{enumerate}
\item[(a)] $M_\psi$ is bounded on $\Bloch(\B_n)$.
\item[(b)] $M_\psi$ is bounded on $\lilBloch(\B_n)$.
\item[(c)] $\psi \in H^\infty(\B_n)$ and $\displaystyle\sup_{z \in \B_n}\;\log\frac{1+\norm{z}}{1-\norm{z}}Q_\psi(z)$ is finite.
\end{enumerate}
\end{corollary}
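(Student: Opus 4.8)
The plan is to deduce this corollary directly from Theorem~\ref{theorem:bounded operator on bloch space} together with the explicit formula \eqref{equation:omega_ball} for $\omega$ and $\omega_0$ on the ball. The key observation is that on $\B_n$ we have $\lilBloch(\B_n) = \Bloch_{0^*}(\B_n)$ (since $\partial\B_n = \partial^*\B_n$), and moreover $\omega_0(z) = \omega(z) = \tfrac12\log\frac{1+\norm{z}}{1-\norm{z}}$ for every $z \in \B_n$. Taking the supremum of $\omega(z)Q_\psi(z)$ and $\omega_0(z)Q_\psi(z)$ over $z\in\B_n$, this equality yields $\sigma_\psi = \sigma_{0,\psi} = \tfrac12\sup_{z\in\B_n}\log\frac{1+\norm{z}}{1-\norm{z}}\,Q_\psi(z)$, so in particular $\sigma_\psi<\infty$ if and only if the supremum in (c) is finite.

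First I would establish (a)$\implies$(c): if $M_\psi$ is bounded on $\Bloch(\B_n)$, Theorem~\ref{theorem:bounded operator on bloch space}(a) gives $\psi\in H^\infty(\B_n)$ and $\sigma_\psi<\infty$, and by the computation above the quantity in (c), being $2\sigma_\psi$, is finite. Next, (c)$\implies$(b): assuming $\psi\in H^\infty(\B_n)$ and the supremum in (c) finite, we get $\sigma_{0,\psi}<\infty$; it remains to check the remaining hypothesis of Theorem~\ref{theorem:bounded operator on bloch space}(b), namely $\psi\in\Bloch_{0^*}(\B_n) = \lilBloch(\B_n)$. This follows because the finiteness of $\sup_{z}\log\frac{1+\norm{z}}{1-\norm{z}}\,Q_\psi(z)$ forces $Q_\psi(z)\to 0$ as $\norm{z}\to1^-$ (the factor $\log\frac{1+\norm{z}}{1-\norm{z}}$ blows up), which is precisely membership in $\lilBloch(\B_n)$; hence $\psi\in H^\infty(\B_n)\cap\Bloch_{0^*}(\B_n)$ and Theorem~\ref{theorem:bounded operator on bloch space}(b) gives that $M_\psi$ is bounded on $\lilBloch(\B_n)$. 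Finally, (b)$\implies$(a): if $M_\psi$ is bounded on $\lilBloch(\B_n)=\Bloch_{0^*}(\B_n)$, Theorem~\ref{theorem:bounded operator on bloch space}(b) gives $\psi\in H^\infty(\B_n)$ and $\sigma_{0,\psi}<\infty$; since $\sigma_\psi = \sigma_{0,\psi}$ on the ball, Theorem~\ref{theorem:bounded operator on bloch space}(a) yields boundedness of $M_\psi$ on $\Bloch(\B_n)$.

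The only step requiring a moment's care is the verification that $\sigma_\psi<\infty$ (equivalently, finiteness of the supremum in (c)) already implies $\psi\in\lilBloch(\B_n)$; this is what makes the cycle close without having to assume $\psi\in\lilBloch(\B_n)$ separately in (c). It is a short argument: near a boundary point the weight $\log\frac{1+\norm{z}}{1-\norm{z}}$ is bounded below by a positive constant times $-\log(1-\norm{z})\to\infty$, so a uniform bound on the product forces $Q_\psi(z)\to0$. Everything else is bookkeeping with the two parts of Theorem~\ref{theorem:bounded operator on bloch space} and the identity $\omega_0=\omega$ on $\B_n$. I do not anticipate any genuine obstacle here — the content is entirely in Theorem~\ref{theorem:bounded operator on bloch space} and formula \eqref{equation:omega_ball}, and the corollary is a clean specialization.
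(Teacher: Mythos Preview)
Your proposal is correct and follows essentially the same approach as the paper: both arguments reduce everything to Theorem~\ref{theorem:bounded operator on bloch space} via the identity $\omega_0(z)=\omega(z)=\tfrac12\log\frac{1+\norm{z}}{1-\norm{z}}$ on $\B_n$, and both isolate the one nontrivial step as showing that finiteness of the supremum in (c) forces $Q_\psi(z)\to0$ as $\norm{z}\to1^-$ (hence $\psi\in\lilBloch(\B_n)$), using that the logarithmic weight blows up at the boundary. The only cosmetic difference is that the paper organizes the implications as $(a)\Longleftrightarrow(c)$, $(b)\Longrightarrow(c)$, $(c)\Longrightarrow(b)$, whereas you run the cycle $(a)\Rightarrow(c)\Rightarrow(b)\Rightarrow(a)$.
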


\begin{proof} The equivalence $(a) \Longleftrightarrow (c)$ and the implication $(b) \Longrightarrow (c)$ follow immediately from Theorem \ref{theorem:bounded operator on bloch space} and (\ref{equation:omega_ball}).  To show that $(c) \Longrightarrow (b)$, it suffices to show that $\psi \in \lilBloch(\B_n)$.  Since $\displaystyle\sup_{z \in \B_n}\log\frac{1+\norm{z}}{1-\norm{z}}Q_\psi(z)$ is finite, and as $\norm{z} \to 1^-$, $\log\frac{1+\norm{z}}{1-\norm{z}}$ goes to $\infty$, it must be the case that $Q_\psi(z) \to 0$.  Thus $\psi \in \lilBloch(\B_n)$, as desired.
\end{proof}

We will now show the analogous result for the Bloch space on the polydisk.  We first need the following lemma.

\begin{lemma}\label{lemma_polydisk} For $z \in \D^n$ and $k = 1,\dots,n$, the following inequalities hold:
\begin{enumerate}
\item[(a)] $\displaystyle\frac{1}{2}\log\frac{1+\mod{z_k}}{1-\mod{z_k}} \leq \omega(z).$
\item[(b)] $\displaystyle\rho(0,z) \leq \frac{1}{2}\sum_{k = 1}^n\log\frac{1+\mod{z_k}}{1-\mod{z_k}}$.
\end{enumerate}
\end{lemma}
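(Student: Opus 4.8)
\medskip
\noindent\emph{Proof strategy.}
The plan is to prove (a) by exhibiting an explicit competitor in the extremal problem defining $\omega(z)$, and to prove (b) by constructing an explicit path from $0$ to $z$ and bounding its Bergman length. Both arguments rely on the fact that, in the normalization used in the text (the one for which $Q_g(\zeta)=(1-\mod{\zeta}^2)\mod{g'(\zeta)}$ when $g\in H(\D)$), the Bergman metric of $\D^n$ is the orthogonal direct sum of those of the factor disks, i.e. $H_w(u,\conj u)=\sum_{j=1}^n\frac{\mod{u_j}^2}{(1-\mod{w_j}^2)^2}$ for $w\in\D^n$ and $u\in\C^n$; I would recall this at the outset. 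In particular, for an $f\in H(\D^n)$ depending only on the $k$-th variable, the supremum defining $Q_f$ is attained along the $k$-th coordinate direction and $Q_f(w)=(1-\mod{w_k}^2)\,\bigl|\tfrac{\partial f}{\partial w_k}(w)\bigr|$.

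For (a), fix $k$, pick a unimodular constant $\lambda$ with $\lambda z_k=\mod{z_k}$, and test with
$$f(w)=\tfrac12\log\frac{1+\lambda w_k}{1-\lambda w_k},\qquad w\in\D^n .$$
Then $f\in H(\D^n)$, $f(0)=0$, and $\frac{\partial f}{\partial w_k}(w)=\frac{\lambda}{1-\lambda^2w_k^2}$, so by the remark above
$$Q_f(w)=(1-\mod{w_k}^2)\,\Bigl|\tfrac{\partial f}{\partial w_k}(w)\Bigr|=\frac{1-\mod{w_k}^2}{\mod{1-\lambda^2w_k^2}}\le 1,$$
using $\mod{1-\lambda^2w_k^2}\ge 1-\mod{w_k}^2$. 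Evaluating at $w=0$ gives $\beta_f=1$, hence $\blochnorm f=\mod{f(0)}+\beta_f=1$ and $f$ is admissible for $\omega(z)$. Since $\lambda z_k=\mod{z_k}\in[0,1)$, we get $f(z)=\tfrac12\log\frac{1+\mod{z_k}}{1-\mod{z_k}}\ge 0$, whence $\omega(z)\ge\mod{f(z)}=\tfrac12\log\frac{1+\mod{z_k}}{1-\mod{z_k}}$, which is (a). (The case $z_k=0$ is trivial since $\omega\ge 0$.)

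For (b), I would first recall that integrating the Bergman line element of $\D$ along a radial segment shows the Bergman distance on $\D$ from $0$ to $\zeta$ equals $\tfrac12\log\frac{1+\mod\zeta}{1-\mod\zeta}$. Then join $0$ to $z$ in $\D^n$ by the concatenation $\gamma=\gamma_1\ast\cdots\ast\gamma_n$, where $\gamma_j$ runs from $(z_1,\dots,z_{j-1},0,\dots,0)$ to $(z_1,\dots,z_{j-1},z_j,0,\dots,0)$ by moving only the $j$-th coordinate along a Bergman geodesic of $\D$ from $0$ to $z_j$. Because the coordinates other than the $j$-th are constant along $\gamma_j$, the direct-sum form of the metric shows the Bergman length of $\gamma_j$ in $\D^n$ equals its Bergman length in $\D$, namely $\tfrac12\log\frac{1+\mod{z_j}}{1-\mod{z_j}}$. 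Hence $\rho(0,z)$ is at most the length of $\gamma$, which is $\tfrac12\sum_{j=1}^n\log\frac{1+\mod{z_j}}{1-\mod{z_j}}$, proving (b).

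I do not anticipate a genuine obstacle: the only delicate points are the normalization bookkeeping (so the factor $\tfrac12$ comes out correctly) and checking that $\gamma$ stays inside $\D^n$ (each coordinate remains in $\D$ throughout). The substantive content is simply the choice of the logarithmic test function in (a) and the coordinate-by-coordinate path in (b).
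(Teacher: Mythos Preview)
Your proof of (a) is correct and essentially identical to the paper's: both test with the single-variable logarithm $\tfrac12\log\frac{1+\lambda w_k}{1-\lambda w_k}$ (the paper writes it as $\tfrac12\mathrm{Log}\frac{|z_k|+w_k\overline{z_k}}{|z_k|-w_k\overline{z_k}}$, which is the same function after dividing numerator and denominator by $|z_k|$), check $Q_f\le 1$ with equality at $0$, and read off $\omega(z)\ge |f(z)|$.

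For (b) your argument is correct but takes a genuinely different route. The paper uses the \emph{single} radial path $\gamma(t)=tz$ (which it identifies as the geodesic), writes
\[
\rho(0,z)=\int_0^1\Bigl(\sum_{k=1}^n\frac{|z_k|^2}{(1-|z_k|^2t^2)^2}\Bigr)^{1/2}dt,
\]
and then applies the pointwise inequality $\bigl(\sum a_k^2\bigr)^{1/2}\le\sum a_k$ to bound the integrand and integrate term by term. You instead use the \emph{staircase} path moving one coordinate at a time, so that each leg lives in a single disk factor and the product form of the metric makes its length exactly $\tfrac12\log\frac{1+|z_j|}{1-|z_j|}$ with no further estimate needed. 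Your approach is arguably cleaner in that it avoids both the geodesic claim and the $\ell^2\le\ell^1$ step; the paper's approach, on the other hand, yields the exact value of $\rho(0,z)$ as a byproduct (the integral before the inequality), should one want it.
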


\begin{proof}  To prove (a), fix $z\in \D^n$ and $k = 1,\dots,n$, and for $w \in \D^n$, define $$h(w) = \frac{1}{2}\hbox{Log}\frac{\mod{z_k} + w_k\conj{z_k}}{\mod{z_k} - w_k\conj{z_k}},$$ where Log denotes the principal branch of the logarithm. Then $h\in H(\D^n)$, $h(0) = 0$, $\frac{\partial h}{\partial z_j}(w) = 0$ for $j \neq k$, and $\frac{\partial h}{\partial z_k}(w) = \frac{\conj{z_k}\mod{z_k}}{\mod{z_k}^2-w_k^2\conj{z_k}^2}.$  By Theorem~3.3 of \cite{CohenColonna:08}, $\beta_h(w) = \frac{(1-\mod{w_k}^2)\mod{z_k}^2}{\mod{\mod{z_k}^2 - w_k^2\conj{z_k}^2}} \leq 1.$  In particular, $$\frac{1}{2}\log\frac{1+\mod{z_k}}{1-\mod{z_k}} = \mod{h(z)} \leq \omega(z).$$

To prove (b), observe that for $z \in \D^n$ and $u\in\C^n$, $$H_z(u,\overline{u})=\sum_{k=1}^n\frac{|u_k|^2}{(1-|z_k|^2)^2}$$ (e.g. see \cite{Timoney:80-I}), and recall that if $\gamma=\gamma(t)$ ($0\leq t\leq 1$) is the geodesic from $w$ to $z$, then $$\rho(w,z)=\int_0^1 H_{\gamma(t)}(\gamma'(t),\overline{\gamma'(t)})^{1/2}\,dt.$$ Since the geodesic from 0 to $z\in\D^n$ is parametrized by $\gamma(t)=tz$, for $0\leq t\leq 1$, we obtain
$$\begin{aligned}
\rho(0,z) &= \int_0^1\left(\sum_{k=1}^n\frac{\mod{z_k}^2}{(1-\mod{z_k}^2t^2)^2}\right)^{1/2}\;dt \leq \int_0^1\sum_{k=1}^n \frac{\mod{z_k}}{1-\mod{z_k}^2t^2}\;dt\\ &= \frac{1}{2}\sum_{k=1}^n\log\frac{1+\mod{z_k}}{1-\mod{z_k}}.\;\qedhere
\end{aligned}$$
\end{proof}

\begin{theorem}\label{theorem: M_psi equiv} Let $\psi \in H(\D^n)$.  Then the following are equivalent:
\begin{enumerate}
\item[(a)] $M_\psi$ is bounded on $\Bloch(\D^n)$.
\item[(b)] $M_\psi$ is bounded on $\Bloch_{0^*}(\D^n)$.
\item[(c)] $\psi \in H^\infty(\D^n)$ and $\displaystyle\sup_{z \in \D^n}\sum_{k=1}^n \log\frac{1+\mod{z_k}}{1-\mod{z_k}}Q_\psi(z)$ is finite.
\end{enumerate}
\end{theorem}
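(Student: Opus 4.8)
The plan is to parallel the proof of Corollary~\ref{corollary:bounded_on_ball} by reducing all three conditions to the two-sided estimate
\[
\frac{1}{2n}\sum_{k=1}^{n}\log\frac{1+\mod{z_k}}{1-\mod{z_k}}\;\le\;\omega_{0}(z)\;\le\;\omega(z)\;\le\;\frac{1}{2}\sum_{k=1}^{n}\log\frac{1+\mod{z_k}}{1-\mod{z_k}},\qquad z\in\D^{n}.
\]
Write $S(z)=\sum_{k=1}^{n}\log\frac{1+\mod{z_k}}{1-\mod{z_k}}$. The rightmost inequality is (\ref{omega_0_omega_rho}) combined with Lemma~\ref{lemma_polydisk}(b); the middle inequality is (\ref{omega_0_omega_rho}); and Lemma~\ref{lemma_polydisk}(a) gives $\omega(z)\ge\frac12\max_{k}\log\frac{1+\mod{z_k}}{1-\mod{z_k}}\ge\frac{1}{2n}S(z)$. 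Granting the estimate, $(a)\Leftrightarrow(c)$ is immediate: by Theorem~\ref{theorem:bounded operator on bloch space}(a), $M_\psi$ is bounded on $\Bloch(\D^n)$ iff $\psi\in H^\infty(\D^n)$ and $\sigma_\psi=\sup_{z}\omega(z)Q_\psi(z)<\infty$, and by the estimate $\sigma_\psi<\infty$ iff $\sup_{z}S(z)Q_\psi(z)<\infty$.

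The one point requiring genuine work is the leftmost inequality, i.e.\ a lower bound for $\omega_0$ and not merely for $\omega$: the test function $h$ built in the proof of Lemma~\ref{lemma_polydisk}(a) is unbounded, and (letting $w_k$ spiral toward $z_k/\mod{z_k}$ while the remaining coordinates tend to the unit circle) one checks that $Q_h$ does not tend to $0$ at the distinguished boundary, so $h\notin\Bloch_{0^*}(\D^n)$ and cannot be used directly. I would remedy this with a dilation argument, which in fact yields the stronger statement $\omega_0=\omega$ on $\D^n$. Given $\psi\in\Bloch(\D^n)$ with $\psi(0)=0$ and $\blochnorm{\psi}\le1$, set $\psi_r(w)=\psi(rw)$ for $0<r<1$. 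Then $\psi_r$ is holomorphic on $\tfrac1r\D^n$, hence its partial derivatives are bounded on $\D^n$; since $H_w(u,\conj u)^{1/2}\to\infty$ as $w\to\partial^*\D^n$ for every $u\ne0$, a Cauchy--Schwarz estimate then forces $Q_{\psi_r}(w)\to0$ there, i.e.\ $\psi_r\in\Bloch_{0^*}(\D^n)$. Using the explicit formula $H_w(u,\conj u)=\sum_k\mod{u_k}^2/(1-\mod{w_k}^2)^2$ together with the elementary inequality $r(1-\mod{w_k}^2)\le 1-r^2\mod{w_k}^2$, one gets $H_w(u,\conj u)\ge H_{rw}(ru,\overline{ru})$, whence $Q_{\psi_r}(z)\le Q_\psi(rz)$ and therefore $\beta_{\psi_r}\le\beta_\psi\le1$. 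Since also $\psi_r(0)=0$, we obtain $\omega_0(z)\ge\mod{\psi_r(z)}=\mod{\psi(rz)}$, and letting $r\to1^-$ gives $\omega_0(z)\ge\mod{\psi(z)}$; taking the supremum over all such $\psi$ yields $\omega_0(z)\ge\omega(z)$, and with (\ref{omega_0_omega_rho}) we conclude $\omega_0\equiv\omega$ on $\D^n$. I expect this to be the main obstacle, since one must simultaneously verify that the dilates stay in $\Bloch_{0^*}(\D^n)$ and that dilation does not enlarge the Bloch seminorm.

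With $\omega_0=\omega$, the displayed estimate holds with $\omega_0$ in place of $\omega$, and $(b)\Leftrightarrow(c)$ follows: by Theorem~\ref{theorem:bounded operator on bloch space}(b), $M_\psi$ is bounded on $\Bloch_{0^*}(\D^n)$ iff $\psi\in H^\infty(\D^n)\cap\Bloch_{0^*}(\D^n)$ and $\sigma_{0,\psi}=\sup_z\omega_0(z)Q_\psi(z)<\infty$, and the estimate turns the last condition into $\sup_zS(z)Q_\psi(z)<\infty$. Conversely, if (c) holds then, since $S(z)\to\infty$ as $z\to\partial^*\D^n$ (all coordinates tending to the unit circle), the boundedness of $S(z)Q_\psi(z)$ forces $Q_\psi(z)\to0$ there, so the membership $\psi\in\Bloch_{0^*}(\D^n)$ is automatic, exactly as in the passage $(c)\Rightarrow(b)$ of Corollary~\ref{corollary:bounded_on_ball}. (If one prefers to avoid proving $\omega_0=\omega$, it suffices to extract from the dilation argument only the bound $\omega_0\ge\frac{1}{2n}S$; this, together with $\omega_0\le\omega\le\frac12 S$, is enough for $(b)\Leftrightarrow(c)$, while $(a)\Leftrightarrow(c)$ needs only Lemma~\ref{lemma_polydisk}.)
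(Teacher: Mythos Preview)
Your argument is correct and takes a genuinely different route from the paper's. Both proofs obtain $(a)\Leftrightarrow(c)$ the same way, via Theorem~\ref{theorem:bounded operator on bloch space}(a) and Lemma~\ref{lemma_polydisk}. The divergence is in $(b)\Rightarrow(c)$: the paper constructs, for each $k$ and each $w\in\D^n$, the test function $f_w(z)=\tfrac12\mathrm{Log}\frac{1+\conj{w_k}z_k}{1-\conj{w_k}z_k}$, checks that $f_w\in\Bloch_{0^*}(\D^n)$ with $\blochnorm{f_w}\le|w_k|<1$, feeds it into the inequality $|f_w(z)|Q_\psi(z)\le\sigma_{0,\psi}$, and then lets $|w_k|\to1$ after controlling the argument of the logarithm. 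You instead prove the stronger fact $\omega_0\equiv\omega$ on $\D^n$ by dilating an arbitrary competitor $\psi\in\Bloch(\D^n)$: your two key computations, $\psi_r\in\Bloch_{0^*}(\D^n)$ (because bounded derivatives and $H_w(u,\conj u)\to\infty$ at $\partial^*\D^n$) and $\beta_{\psi_r}\le\beta_\psi$ (from the termwise inequality $r(1-|w_k|^2)\le 1-r^2|w_k|^2$), are both sound. The paper's approach is more hands-on and avoids any appeal to compactness of $\overline{r\D^n}$; your approach is more conceptual and, as a bonus, answers affirmatively the paper's Open Question~3(a) for $D=\D^n$ (indeed the dilation argument carries over to any bounded homogeneous domain on which the Bergman metric satisfies $H_w(u,\conj u)\ge H_{rw}(ru,\overline{ru})$).
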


\begin{proof} The equivalence $(a) \Longleftrightarrow (c)$ follows at once from Theorem \ref{theorem:bounded operator on bloch space}(a), Lemma \ref{lemma_polydisk}, and (\ref{omega_0_omega_rho}).

	To prove $(c) \Longrightarrow (b)$, suppose $\psi \in H^\infty(\D^n)$ and $\sup_{z \in \D^n}\sum_{k=1}^n \log\frac{1+\mod{z_k}}{1-\mod{z_k}}Q_\psi(z)$ is finite.  Since as $z \to \partial^*\D^n$, $\log\frac{1+\mod{z_k}}{1-\mod{z_k}} \to \infty$ for all $k \in \{1,\dots,n\}$, the finiteness of $\sup_{z \in \D^n} Q_\psi(z)\sum_{k=1}^n\log\frac{1+\mod{z_k}}{1-\mod{z_k}}$ implies that $Q_\psi(z) \to 0$ as $z \to \partial^*\D^n$, and thus $\psi \in \Bloch_{0^*}(D)$. Furthermore, by (\ref{omega_0_omega_rho}) and Lemma \ref{lemma_polydisk}(b), $\sigma_{0,\psi} < \infty$.  By Theorem \ref{theorem:bounded operator on bloch space}(b), $M_\psi$ is bounded on $\Bloch_{0^*}(\D^n)$.

	Conversely, assume $M_\psi$ is bounded on $\Bloch_{0^*}(\D^n)$. Then by Theorem \ref{theorem:bounded operator on bloch space}(b), $\psi \in H^\infty(\D^n) \cap \Bloch_{0^*}(\D^n)$ and $\sigma_{0,\psi} < \infty$. Fix $k \in \{1,\dots,n\}$ and $w \in \D^n$, and for $z \in \D^n$ define $$f_w(z) = \frac{1}{2}\mathrm{Log}\frac{1+\conj{w_k}z_k}{1-\conj{w_k}z_k}.$$  Observe that $f_w(0) = 0$ and, for $z\in\D^n$, $\frac{\partial f_w}{\partial z_j}(z) = 0$ for all $j \neq k$, and $$\frac{\partial f_w}{\partial z_k}(z) = \frac{\conj{w_k}}{1-\conj{w_k}^2z_k^2}.$$  From Theorem 3.3 of \cite{CohenColonna:08}, we have
\begin{eqnarray} Q_{f_w}(z)&=&\left\|\left((1-|z_1|^2)\mod{\frac{\partial f_w}{\partial z_1}(z)},\dots,(1-|z_n|^2)\mod{\frac{\partial f_w}{\partial z_n}(z)}\right)\right\|\nonumber\\ &=&\frac{(1-|z_k|^2)|w_k|}{|1-\conj{w_k}^2z_k^2|}\leq \frac{(1-\mod{z_k}^2)\mod{w_k}}{1-\mod{w_k}^2\mod{z_k}^2}\leq |w_k|.\nonumber\end{eqnarray}    Thus $\blochnorm{f_w} \leq \mod{w_k} < 1.$  Furthermore, $Q_{f_w}(z) \to 0$ as $z \to \partial^*\D^n$, and so $f_w \in \Bloch_{0^*}(\D^n)$.  Hence, for $z \in \D^n$, \begin{equation}\label{f_w < sigma_0}\mod{f_w(z)}Q_\psi(z) \leq \omega_{0,\psi}(z)Q_\psi(z) \leq \sigma_{0,\psi}.\end{equation}  Observe that
$$\mod{f_w(z)} \geq \frac{1}{2}\left(\log\mod{\frac{1+\conj{w_k}z_k}{1-\conj{w_k}z_k}} - \frac{\pi}{2}\right),$$ and so $$\frac{1}{2}\log\mod{\frac{1+\conj{w_k}z_k}{1-\conj{w_k}z_k}} \leq \mod{f_w(z)} + \frac{\pi}{4}.$$  Let $w_k = \mod{w_k}e^{i\theta_k}$ and choose $z_k$ so that $\arg(z_k) = \theta_k$.  From (\ref{f_w < sigma_0}), we obtain
$$\frac{1}{2}Q_\psi(z)\log\frac{1+\mod{w_k}\mod{z_k}}{1-\mod{w_k}\mod{z_k}} \leq \sigma_{0,\psi} + \frac{\pi}{4}\beta_\psi.$$  By letting $\mod{w_k} \to 1$, we obtain $$\sup_{z \in \D^n}Q_\psi(z)\log\frac{1+\mod{z_k}}{1-\mod{z_k}} \leq 2\sigma_{0,\psi} + \frac{\pi}{2}\beta_\psi.$$ We deduce that
$$\sup_{z\in \D^n}Q_\psi(z)\sum_{k=1}^n\log\frac{1+\mod{z_k}}{1-\mod{z_k}}\leq n\left(2\sigma_{0,\psi} + \frac{\pi}{2}\beta_\psi\right)<\infty,$$ proving that $(b)\Longrightarrow (c)$.
\end{proof}

\begin{corollary}\label{constantsymbol} Let $n \geq 2$ and $\psi \in H(\D^n)$.  Then $M_\psi$ is bounded on $\Bloch(\D^n)$ if and only if $\psi$ is a constant function.\end{corollary}

\begin{proof} It is immediate that $M_\psi$ is bounded on $\Bloch(\D^n)$ if $\psi$ is a constant function. Conversely, suppose $M_\psi$ is bounded on $\Bloch(\D^n)$. By Theorem \ref{theorem: M_psi equiv}, $\psi$ is a Bloch function and $$\sup_{z \in \D^n}Q_\psi(z)\sum_{k=1}^n\log\frac{1+\mod{z_k}}{1-\mod{z_k}} < \infty.$$
As $z \to \partial\D^n$, $\log\frac{1+\mod{z_k}}{1-\mod{z_k}} \to \infty$ for some $k \in \{1,\dots,n\}$. Thus, $$\lim_{z \to \partial\D^n} Q_\psi(z) = 0.$$  Since $n \geq 2$, the unit polydisk is not biholomorphically equivalent to the unit ball, and so by Proposition 4.1 of \cite{Timoney:80-II}, $\psi$ is a constant function.
\end{proof}

Finally, we provide sufficient conditions for the boundedness of $M_\psi$ on $\Bloch(D)$ to be equivalent to the boundedness on $\Bloch_{0^*}(D)$, where $D$ is any bounded homogeneous domain in $\C^n$.  The notation $A \asymp B$ means there exist constants $c_1,c_2 > 0$ such that $c_1A \leq B \leq c_2A$.

\begin{proposition} Let $D$ be a bounded homogeneous domain in $\C^n$ and $\psi \in H(D)$.  If $\sigma_\psi \asymp \sigma_{0,\psi}$ and $\displaystyle\lim_{z \to \distbd{D}} \omega_0(z) = \infty$, then $M_\psi$ is bounded on $\Bloch(D)$ if and only if it is bounded on $\Bloch_{0^*}(D)$.
\end{proposition}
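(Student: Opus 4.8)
The plan is to reduce the entire statement to the characterizations in Theorem~\ref{theorem:bounded operator on bloch space}, exactly as was done for the ball in Corollary~\ref{corollary:bounded_on_ball}. The first observation is that the hypothesis $\sigma_\psi \asymp \sigma_{0,\psi}$ instantly yields that $\sigma_\psi < \infty$ if and only if $\sigma_{0,\psi} < \infty$, since each of these quantities is bounded by a positive constant times the other.

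For the implication that boundedness on $\Bloch_{0^*}(D)$ forces boundedness on $\Bloch(D)$, suppose $M_\psi$ is bounded on $\Bloch_{0^*}(D)$. By Theorem~\ref{theorem:bounded operator on bloch space}(b), $\psi \in H^\infty(D) \cap \Bloch_{0^*}(D)$ and $\sigma_{0,\psi} < \infty$; in particular $\psi \in H^\infty(D)$, and by the comparison above $\sigma_\psi < \infty$. Theorem~\ref{theorem:bounded operator on bloch space}(a) then gives that $M_\psi$ is bounded on $\Bloch(D)$. Note that this direction uses only $\sigma_\psi \asymp \sigma_{0,\psi}$ and not the growth condition on $\omega_0$.

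For the converse, assume $M_\psi$ is bounded on $\Bloch(D)$. By Theorem~\ref{theorem:bounded operator on bloch space}(a) we have $\psi \in H^\infty(D)$ and $\sigma_\psi < \infty$, hence $\sigma_{0,\psi} < \infty$ as well. The only thing left to verify is that $\psi \in \Bloch_{0^*}(D)$, i.e. $Q_\psi(z) \to 0$ as $z \to \partial^*D$. This is where the hypothesis $\lim_{z \to \partial^*D} \omega_0(z) = \infty$ enters: from $\omega_0(z)Q_\psi(z) \le \sigma_{0,\psi}$ for every $z \in D$ and the fact that $\omega_0(z) \to \infty$, we get $Q_\psi(z) \le \sigma_{0,\psi}/\omega_0(z) \to 0$ as $z \to \partial^*D$ (the division being legitimate since $\omega_0(z)$ is eventually nonzero near $\partial^*D$). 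Thus $\psi \in H^\infty(D) \cap \Bloch_{0^*}(D)$ with $\sigma_{0,\psi} < \infty$, and Theorem~\ref{theorem:bounded operator on bloch space}(b) completes the argument. I do not anticipate any genuine obstacle here; the proposition is essentially a bookkeeping combination of the two parts of Theorem~\ref{theorem:bounded operator on bloch space} with the two hypotheses, the substantive point being the elementary estimate $Q_\psi(z) \le \sigma_{0,\psi}/\omega_0(z)$ that forces $\psi$ into the $*$-little Bloch space.
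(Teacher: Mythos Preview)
Your proof is correct and follows essentially the same route as the paper's: both directions are reduced to the characterizations in Theorem~\ref{theorem:bounded operator on bloch space}, with the comparison $\sigma_\psi \asymp \sigma_{0,\psi}$ transferring finiteness between the two quantities and the growth condition $\omega_0(z)\to\infty$ forcing $Q_\psi(z)\to 0$ via the pointwise bound $\omega_0(z)Q_\psi(z)\le\sigma_{0,\psi}$. The paper's version is simply terser.
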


\begin{proof} First, assume $M_\psi$ is bounded on $\Bloch(D)$. Then, $\psi \in H^\infty(D)$ and $\sigma_{0,\psi}$ is finite. From the hypothesis, it follows that $Q_\psi(z) \to 0$ as $z \to \distbd{D}$. Thus, $\psi\in \Bloch_{0^*}(D)$, proving that $M_\psi$ is bounded on $\Bloch_{0^*}(D)$. Conversely, if $M_\psi$ is bounded on $\Bloch_{0^*}(D)$, it follows immediately that $M_\psi$ is bounded on $\Bloch(D)$, since $\sigma_\psi \asymp \sigma_{0,\psi}$.
\end{proof}

\section{Operator Norm Estimates}
In this section, we provide estimates on the norm of the bounded multiplication operators on the Bloch space and $*$-little Bloch space of a bounded homogeneous domain. 
These estimates correspond to those established in \cite{AllenColonna:08} for the case of the unit disk.

\begin{theorem}\label{norm_estimate_bloch} Let $D$ be a bounded homogeneous domain in $\C^n$.
\begin{enumerate}
\item[\normalfont{(a)}] If $\psi \in H(D)$ induces a bounded multiplication operator on $\Bloch(D)$, then
$$\max\{\blochnorm{\psi}, \supnorm{\psi}\} \leq \norm{M_\psi} \leq \max\{\blochnorm{\psi}, \supnorm{\psi} + \sigma_\psi\}.$$

\item[\normalfont{(b)}] If $\psi \in H(D)$ induces a bounded multiplication operator on $\Bloch_{0^*}(D)$, then
$$\max\{\blochnorm{\psi}, \supnorm{\psi}\} \leq \norm{M_\psi} \leq \max\{\blochnorm{\psi}, \supnorm{\psi} + \sigma_{0,\psi}\}.$$
\end{enumerate}
\end{theorem}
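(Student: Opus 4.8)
<br>

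The plan is to prove both bounds, the lower and the upper, working mostly with part (a) and then observing that part (b) is entirely parallel via the $*$-little Bloch analogues of the lemmas and quantities.

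For the upper bound, I would simply revisit inequality~(\ref{equation:blochnorm_inequality_2}) from the proof of Theorem~\ref{theorem:bounded operator on bloch space}(a), namely
$$\blochnorm{M_\psi f} \leq \mod{f(0)}\blochnorm{\psi} + (\supnorm{\psi} + \sigma_\psi)\beta_f.$$
Since $\mod{f(0)} + \beta_f = \blochnorm{f}$, the right-hand side is a convex combination-type bound dominated by $\max\{\blochnorm{\psi},\,\supnorm{\psi}+\sigma_\psi\}\bigl(\mod{f(0)}+\beta_f\bigr) = \max\{\blochnorm{\psi},\,\supnorm{\psi}+\sigma_\psi\}\blochnorm{f}$. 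Taking the supremum over $f$ with $\blochnorm{f}\leq 1$ gives $\norm{M_\psi}\leq\max\{\blochnorm{\psi},\,\supnorm{\psi}+\sigma_\psi\}$. This step is routine.

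For the lower bound I need two test functions. Applying $M_\psi$ to the constant function $1$ gives $M_\psi 1 = \psi$, and since $\blochnorm{1}=1$ we get $\blochnorm{\psi}\leq\norm{M_\psi}$. The bound $\supnorm{\psi}\leq\norm{M_\psi}$ is already recorded in the proof of Theorem~\ref{theorem:bounded operator on bloch space}(a), where Lemma~11 of \cite{DurenRombergShields:69} was invoked; I would just cite that. Combining the two yields $\max\{\blochnorm{\psi},\supnorm{\psi}\}\leq\norm{M_\psi}$. The main (and only real) subtlety is making sure the $\supnorm{\psi}\leq\norm{M_\psi}$ estimate is legitimately available in the $*$-little Bloch setting for part (b): since $1\in\Bloch_{0^*}(D)$ and the evaluation-functional argument underlying Lemma~11 of \cite{DurenRombergShields:69} only uses that point evaluations are bounded on the space (which holds by Lemma~\ref{lemma:mod_f_inequality}(b) together with $\omega_0(z)<\infty$), the same conclusion $\supnorm{\psi}\leq\norm{M_\psi}$ holds, and the rest of the argument for (b) is verbatim the same as (a) with $\sigma_\psi$ replaced by $\sigma_{0,\psi}$ and~(\ref{equation:blochnorm_inequality_2}) replaced by its $\Bloch_{0^*}(D)$ counterpart. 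I do not expect any genuine obstacle here; the whole proof is an immediate consequence of estimates already assembled in the boundedness section.
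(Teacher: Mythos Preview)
Your proposal is correct and follows essentially the same route as the paper: the lower bound comes from testing on the constant function $1$ together with Lemma~11 of \cite{DurenRombergShields:69}, and the upper bound comes from inequality~(\ref{equation:blochnorm_inequality_2}). The only cosmetic difference is that the paper substitutes $\mod{f(0)}=\blochnorm{f}-\beta_f$ into (\ref{equation:blochnorm_inequality_2}) and then splits into the two cases $\supnorm{\psi}+\sigma_\psi\lessgtr\blochnorm{\psi}$, whereas you bound $\mod{f(0)}\blochnorm{\psi}+(\supnorm{\psi}+\sigma_\psi)\beta_f$ directly by $\max\{\blochnorm{\psi},\supnorm{\psi}+\sigma_\psi\}\blochnorm{f}$; these are equivalent one-line manipulations.
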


\begin{proof} We will prove the norm estimates for $M_\psi$ bounded on $\Bloch(D)$, the argument for $\Bloch_{0^*}(D)$ is analogous.  By Lemma 11 of \cite{DurenRombergShields:69}, we have $\supnorm{\psi} \leq \norm{M_\psi}$. Furthermore, for $f$ identically 1, $\blochnorm{M_\psi f} = \blochnorm{\psi}$. Therefore $\norm{M_\psi} \geq \max\{\blochnorm{\psi},\supnorm{\psi}\}.$

Next, let $f \in \Bloch(D)$. and apply the identity $\mod{f(0)} = \blochnorm{f}-\beta_f$ to (\ref{equation:blochnorm_inequality_2}) to deduce
\begin{eqnarray} \blochnorm{M_\psi f} \leq \blochnorm{\psi}\blochnorm{f} + \left(\supnorm{\psi} + \sigma_\psi - \blochnorm{\psi}\right)\beta_f.\nonumber\end{eqnarray}
If $\supnorm{\psi} + \sigma_\psi \leq  \blochnorm{\psi}$, then $\blochnorm{M_\psi f} \leq \blochnorm{\psi}\blochnorm{f}$, while if $\supnorm{\psi} + \sigma_\psi \geq  \blochnorm{\psi}$, then
$\blochnorm{M_\psi f} \leq \left(\supnorm{\psi} + \sigma_\psi\right)\blochnorm{f},$ proving the upper estimate. \end{proof}

\section{Spectrum}
In this section, we determine the spectra of the bounded multiplication operators on the Bloch space and $*$-little Bloch space of a bounded homogeneous domain in $\C^n$, thereby extending the results obtained in \cite{AllenColonna:08} for the case of the unit disk.

Recall that the spectrum of an operator $T$ is defined as
$$\sigma(T) = \left\{\lambda \in \C : T - \lambda I \text{ is not invertible}\right\},$$ where $I$ is the identity operator.  If $\lambda \in \C$, then $M_\psi - \lambda I = M_{\psi - \lambda}.$  Thus, $\lambda \in \sigma(M_\psi)$ if and only if $M_{\psi - \lambda}$ is not invertible.

\begin{theorem}\label{theorem:spectrum on bloch space} Let $D$ be a bounded homogeneous domain in $\C^n$ and assume $\psi \in H(D)$ induces a bounded multiplication operator on $\Bloch(D)$ or $\Bloch_{0^*}(D)$.  Then $\sigma(M_\psi) = \overline{\psi(D)}.$\end{theorem}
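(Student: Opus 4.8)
The plan is to establish the two inclusions $\overline{\psi(D)} \subseteq \sigma(M_\psi)$ and $\sigma(M_\psi) \subseteq \overline{\psi(D)}$ separately. For the first inclusion, I would first show that $\psi(D) \subseteq \sigma(M_\psi)$ and then invoke the fact that the spectrum of a bounded operator is closed. To see that $\lambda \in \psi(D)$ forces $M_{\psi-\lambda}$ to be non-invertible, pick $z_0 \in D$ with $\psi(z_0) = \lambda$; then $(\psi - \lambda)(z_0) = 0$, so every function in the range of $M_{\psi-\lambda}$ vanishes at $z_0$. Since the constant function $1$ is in $\Bloch(D)$ (and in $\Bloch_{0^*}(D)$) and does not vanish at $z_0$, the operator $M_{\psi-\lambda}$ is not surjective, hence not invertible, so $\lambda \in \sigma(M_\psi)$.

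For the reverse inclusion, suppose $\lambda \notin \overline{\psi(D)}$; I want to show $M_{\psi - \lambda}$ is invertible. Set $\varphi = \psi - \lambda$. Since $\lambda$ is bounded away from $\psi(D)$, there is $\delta > 0$ with $\mod{\varphi(z)} \geq \delta$ for all $z \in D$, and $\varphi \in H^\infty(D)$ (as $\psi \in H^\infty(D)$ by Theorem \ref{theorem:bounded operator on bloch space}), so $1/\varphi \in H^\infty(D)$. The natural candidate for the inverse of $M_\varphi$ is $M_{1/\varphi}$, and formally $M_\varphi M_{1/\varphi} = M_{1/\varphi} M_\varphi = I$; the real content is to verify that $M_{1/\varphi}$ is a bounded operator on the relevant space. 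By Theorem \ref{theorem:bounded operator on bloch space}(a), it suffices to check that $1/\varphi \in H^\infty(D)$ (done) and that $\sigma_{1/\varphi} = \sup_{z \in D} \omega(z) Q_{1/\varphi}(z) < \infty$. Here I would compute $\nabla(1/\varphi)(z)u = -\varphi(z)^{-2}\,\nabla(\varphi)(z)u$, which gives $Q_{1/\varphi}(z) = \mod{\varphi(z)}^{-2} Q_\varphi(z) \leq \delta^{-2} Q_\psi(z)$, since $Q_\varphi = Q_\psi$ (the two functions differ by a constant). Therefore $\omega(z) Q_{1/\varphi}(z) \leq \delta^{-2}\,\omega(z) Q_\psi(z)$, and taking the supremum yields $\sigma_{1/\varphi} \leq \delta^{-2}\sigma_\psi < \infty$, since $M_\psi$ is assumed bounded. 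For the $\Bloch_{0^*}(D)$ case, the same estimate with $\omega_0$ in place of $\omega$ shows $\sigma_{0,1/\varphi} \leq \delta^{-2}\sigma_{0,\psi} < \infty$, and additionally one needs $1/\varphi \in \Bloch_{0^*}(D)$, which follows from $Q_{1/\varphi}(z) \leq \delta^{-2} Q_\psi(z) \to 0$ as $z \to \partial^* D$ together with $\psi \in \Bloch_{0^*}(D)$. Thus $M_{1/\varphi}$ is bounded, it is a two-sided inverse of $M_{\psi - \lambda}$, and hence $\lambda \notin \sigma(M_\psi)$.

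I do not anticipate a serious obstacle; the argument is a direct transcription of the one-variable case in \cite{AllenColonna:08}. The one point requiring a little care is the reduction of $Q_{1/\varphi}$ to $Q_\psi$: one must observe that $Q_f$ depends only on $\nabla(f)$ — equivalently that $Q_f$ is invariant under adding a constant to $f$ — and that pulling the scalar factor $\mod{\varphi(z)}^{-2}$ out of the supremum defining $Q$ in (\ref{equation:Q_f}) is legitimate because it does not depend on $u$. Everything else is the standard fact that a multiplication operator whose symbol is bounded away from zero is inverted by multiplication by the reciprocal, once boundedness of the latter is secured.
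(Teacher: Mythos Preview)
Your proposal is correct and follows essentially the same argument as the paper's own proof: both establish $\overline{\psi(D)}\subseteq\sigma(M_\psi)$ by noting that $M_{\psi-\lambda}$ fails to be invertible when $\lambda$ is attained, then handle the reverse inclusion by showing that for $\lambda\notin\overline{\psi(D)}$ the reciprocal $g=(\psi-\lambda)^{-1}$ is bounded with $Q_g\leq\delta^{-2}Q_\psi$, whence $\sigma_g\leq\delta^{-2}\sigma_\psi<\infty$ (and analogously on $\Bloch_{0^*}(D)$). Your justification of non-invertibility via non-surjectivity is slightly more explicit than the paper's, but the content is the same.
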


\begin{proof}  We first prove the result for $M_\psi$ acting on $\Bloch(D)$.  Let $\lambda \in \psi(D)$.  Then there exists $z_0 \in D$ at which the function $(\psi(z) - \lambda)^{-1}$ is singular.  Thus $M_{\psi - \lambda}$ is not invertible and $\psi(D) \subseteq \sigma(M_\psi)$.  Since the spectrum is closed, $\overline{\psi(D)} \subseteq \sigma(M_\psi)$.

Suppose $\lambda \not\in \overline{\psi(D)}$.  Then $\psi(z) - \lambda$ is bounded away from zero; that is, there exists $\alpha > 0$ such that $\mod{\psi(z) - \lambda} \geq \alpha$ for all $z \in D$.  Thus the function $g$ defined by $g(z) = (\psi(z) - \lambda)^{-1}$ is bounded holomorphic on $D$.  By Theorem \ref{theorem:bounded operator on bloch space}, the multiplication operator induced by $g$ is bounded on $\Bloch(D)$ since $$\sigma_g = \sup_{z \in D}\;\omega(z)Q_g(z) \leq \sup_{z \in D}\;\frac{1}{\alpha^2}\omega(z)Q_\psi(z) = \frac{1}{\alpha^2}\sigma_\psi < \infty.$$  Thus $\lambda \not\in \sigma(M_\psi)$.

To prove the result for $M_\psi$ acting on $\Bloch_{0^*}(D)$, it suffices to show that the bounded holomorphic function $g$ defined above is in the $*$-little Bloch space for $\lambda \not\in \overline{\psi(D)}$.  Since $\psi \in \Bloch_{0^*}(D)$ and $Q_g(z) \leq \frac{1}{\alpha^2}Q_\psi(z)$, it follows that $Q_g(z) \to 0$ as $z \to \distbd{D}$, as desired.
\end{proof}

\section{Compactness} In this section, we characterize the compact multiplication operators on the Bloch space and the $*$-little Bloch space of a bounded homogeneous domain in $\C^n$.  The key to this characterization is the spectral theorem of compact operators due to Riesz (see Theorem 7.1 of \cite{Conway:90}).  If we apply Theorem \ref{theorem:spectrum on bloch space} to the spectral theorem of compact operators, we obtain the following lemma.

\begin{lemma}\label{corollary:compact operator} Let $D$ be a bounded homogeneous domain in $\C^n$ and $\psi \in H(D)$ such that $M_\psi$ is bounded on $\Bloch(D)$ or $\Bloch_{0^*}(D)$.  Then $\overline{\psi(D)}$ is non-empty and at most countably infinite.  Furthermore, if $\overline{\psi(D)}$ is a singleton, then $\overline{\psi(D)} = \{0\}$.\end{lemma}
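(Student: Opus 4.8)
The plan is to derive the lemma directly from the Riesz--Schauder spectral theory for compact operators, combined with the identification $\sigma(M_\psi)=\overline{\psi(D)}$ established in Theorem~\ref{theorem:spectrum on bloch space}. (Here I read the hypothesis as: $M_\psi$ is compact; boundedness alone is not enough, since a nonzero constant symbol already induces an invertible $M_\psi$.) Before starting, I would record one elementary fact: both $\Bloch(D)$ and $\Bloch_{0^*}(D)$ are infinite-dimensional Banach spaces. Indeed, since $D$ is bounded, every polynomial in the coordinate functions $z_1,\dots,z_n$ restricts to a bounded holomorphic function on $D$, hence lies in $\lilBloch(D)\subseteq\Bloch_{0^*}(D)\subseteq\Bloch(D)$, and the monomials $1,z_1,z_1^2,\dots$ are linearly independent.

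First I would invoke the spectral theorem for compact operators (Theorem~7.1 of \cite{Conway:90}), applied to the compact operator $M_\psi$ on the infinite-dimensional space $X\in\{\Bloch(D),\Bloch_{0^*}(D)\}$: its spectrum $\sigma(M_\psi)$ is a compact subset of $\C$ which is either finite or is a null sequence together with its limit $0$; in either case $\sigma(M_\psi)$ is at most countably infinite. Moreover $0\in\sigma(M_\psi)$, for if $M_\psi$ were invertible then $I=M_\psi^{-1}M_\psi$ would be compact, which is impossible on an infinite-dimensional space; in particular $\sigma(M_\psi)$ is non-empty. By Theorem~\ref{theorem:spectrum on bloch space}, $\sigma(M_\psi)=\overline{\psi(D)}$, and therefore $\overline{\psi(D)}$ is non-empty and at most countably infinite.

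Next I would dispose of the singleton case. Suppose $\overline{\psi(D)}=\{c\}$ for some $c\in\C$. Then $\psi(D)\subseteq\overline{\psi(D)}=\{c\}$, so $\psi$ is the constant function $c$ on $D$ and $M_\psi=cI$. If $c\neq 0$, then $M_\psi=cI$ is invertible, hence not compact on the infinite-dimensional space $X$, contradicting the compactness of $M_\psi$. Therefore $c=0$, i.e. $\overline{\psi(D)}=\{0\}$.

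I do not anticipate a genuine obstacle: once Theorem~\ref{theorem:spectrum on bloch space} is available, the argument is a routine appeal to the Riesz theory. The only points deserving a moment's care are (i) using that $X$ is infinite-dimensional, both to place $0$ in $\sigma(M_\psi)$ and to exclude $M_\psi=cI$ with $c\neq 0$, and (ii) quoting the version of the Riesz spectral theorem that guarantees the spectrum of a compact operator is finite or a null sequence, so that "at most countably infinite" follows.
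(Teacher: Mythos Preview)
Your proposal is correct and follows exactly the route the paper intends: the paper gives no separate proof but simply declares that the lemma is what one gets by applying Theorem~\ref{theorem:spectrum on bloch space} to the Riesz spectral theorem for compact operators, and your write-up is precisely that argument with the details filled in. Your parenthetical reading of the hypothesis as ``$M_\psi$ compact'' rather than merely ``bounded'' is also correct---the stated hypothesis is evidently a slip, since the lemma is introduced as a consequence of the compact spectral theorem and is invoked only under the compactness assumption in the theorem that follows.
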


\begin{theorem}  Let $D$ be a bounded homogeneous domain.  The only compact multiplication operator on $\Bloch(D)$ or $\Bloch_{0^*}(D)$ is the operator whose symbol is identically 0.
\end{theorem}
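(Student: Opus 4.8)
The plan is to combine Lemma~\ref{corollary:compact operator} with the continuity of $\psi$ to force $\psi$ to be constant, and then to invoke the same lemma to identify that constant as $0$. Suppose $M_\psi$ is a compact multiplication operator on $\Bloch(D)$ or $\Bloch_{0^*}(D)$. A compact operator is in particular bounded, so Lemma~\ref{corollary:compact operator} applies: $\overline{\psi(D)}$ is non-empty and at most countably infinite. Now I would exploit the fact that $D$ is connected (being a domain) and $\psi$ is holomorphic, hence continuous, so $\psi(D)$ is a connected subset of $\C$.

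The key step is the following dichotomy. A connected subset of $\C$ is either a single point or has the cardinality of the continuum; in particular, a connected set with more than one point is uncountable. Since $\psi(D) \subseteq \overline{\psi(D)}$ and the latter is at most countable, $\psi(D)$ must be at most countable, hence it cannot have more than one point. Therefore $\psi(D)$ is a singleton, say $\psi \equiv c$ for some constant $c \in \C$. Then $\overline{\psi(D)} = \{c\}$ is a singleton, and the last clause of Lemma~\ref{corollary:compact operator} forces $c = 0$. Thus $\psi$ is identically $0$. Conversely, the zero operator is trivially compact, so $M_\psi$ is compact if and only if $\psi \equiv 0$.

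The main (and essentially only) obstacle is making the topological step rigorous and clean: one must justify that a connected subset of $\C$ containing two distinct points is uncountable. This follows since such a set, being path-connected in the case of an open set, or more simply because a connected metric space with at least two points is uncountable (the image of the distance-to-a-point function is a nondegenerate interval by the intermediate value theorem, hence uncountable). Since $\psi(D)$ is the continuous image of the connected set $D$, it is connected, and the argument applies directly. Everything else is a bookkeeping invocation of results already in hand, so the proof will be short.
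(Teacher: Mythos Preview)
Your proposal is correct and follows essentially the same route as the paper: invoke Lemma~\ref{corollary:compact operator} to get that $\overline{\psi(D)}$ is at most countable, use connectedness of $\psi(D)$ to force it to be a single point, and then apply the singleton clause of the lemma to conclude that point is $0$. Your justification of the topological step (a connected metric space with two points is uncountable) is slightly more explicit than the paper's, which simply asserts that a range with two distinct points ``contains a continuum,'' but the argument is the same in substance.
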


\begin{proof} Let $D$ be a bounded homogeneous domain and $\psi \in H(D)$.  Clearly if $\psi \equiv 0$, then $M_\psi$ is compact on $\Bloch(D)$ or $\Bloch_{0^*}(D)$.  Suppose $M_\psi$ is compact on $\Bloch(D)$ or $\Bloch_{0^*}(D)$.  By Lemma \ref{corollary:compact operator}, the range of $\psi$ is at most countable.  On the other hand, if the range of $\psi$ contains two distinct points, then it contains a continuum, and thus is uncountable.  Hence, the range of $\psi$ must be a single point and by Lemma \ref{corollary:compact operator}, $\psi$ must be identically 0.\end{proof}

\section{Isometries}
In this section, we characterize the isometric multiplication operators on the Bloch space of a large class of bounded symmetric domains in $\C^n$.

\begin{remark}\label{remark:isometry norm} First, we observe that if $M_\psi$ is an isometry on $\Bloch(D)$ for any bounded homogeneous domain $D$, then $\norm{M_\psi} = \blochnorm{\psi}$.  This follows immediately by applying $M_\psi$ to the constant function $1$.  In fact, if $\psi$ is any constant function of modulus one, then $M_\psi$ is an isometry on the Bloch space.  Of course, the same is true on the $*$-little Bloch space.\end{remark}

In \cite{CohenColonna:94}, Cohen and the second author defined the \emph{Bloch constant} of a bounded homogeneous domain $D$ in $\C^n$ as $$c_D = \sup\;\left\{\beta_f : f \in H(D), f(D) \subseteq \D\right\}.$$  When $D$ is a Cartan classical domain, the Bloch constants were computed to be $$c_D = \begin{cases}
\sqrt{2/(n+m)}, &\text{if } D \in R_I,\\
\sqrt{2/(n+1)}, &\text{if } D \in R_{II},\\
\sqrt{1/(n-1)}, &\text{if } D \in R_{III},\\
\sqrt{2/n}, &\text{if } D \in R_{IV}.
\end{cases}$$  In \cite{Zhang:97}, Zhang computed the Bloch constant of the two exceptional domains to be $1/\sqrt{6}$ and $1/3$.  By Theorem 3 of \cite{CohenColonna:94}, extended to include the exceptional domains, if $D = D_1\times\cdots\times D_k$ is a bounded symmetric domain in standard form, then \begin{equation}\label{max c_D}c_D = \max_{1\leq j\leq k} c_{D_j}.\end{equation}

\begin{lemma}\label{lemma:c_D standard form} Let $D = D_1\times\cdots\times D_k$ be a bounded symmetric domain in standard form.  Then $c_D \leq 1$, and $c_D = 1$ if and only if $D_j = \D$ for some $j \in \{1,\dots,k\}$.\end{lemma}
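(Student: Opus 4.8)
The plan is to combine the explicit list of Bloch constants for the irreducible bounded symmetric domains with the product formula \eqref{max c_D}. First I would observe that the disk $\D$ is the domain $R_I$ with $m = n = 1$, so its Bloch constant is $c_\D = \sqrt{2/(1+1)} = 1$; this shows the bound $c_D \leq 1$ is sharp and is attained whenever a disk factor is present. To get $c_D = 1$ in that case, I would invoke \eqref{max c_D}: if $D_j = \D$ for some $j$, then $c_D = \max_{1 \le i \le k} c_{D_i} \ge c_{D_j} = 1$, and the reverse inequality $c_D \le 1$ will follow from the first half of the statement.

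For the first half, $c_D \leq 1$, the key step is to check that each irreducible factor has Bloch constant at most $1$, and then apply \eqref{max c_D} again to conclude $c_D = \max_j c_{D_j} \le 1$. Running through the list: for $R_I$ with $m \ge n \ge 1$ we have $n + m \ge 2$, so $c_{R_I} = \sqrt{2/(n+m)} \le 1$; for $R_{II}$ with $n \ge 2$, $c_{R_{II}} = \sqrt{2/(n+1)} \le \sqrt{2/3} < 1$; for $R_{III}$ with $n \ge 5$, $c_{R_{III}} = \sqrt{1/(n-1)} \le 1/2 < 1$; for $R_{IV}$ with $n \ge 5$, $c_{R_{IV}} = \sqrt{2/n} \le \sqrt{2/5} < 1$; and for the two exceptional domains the values $1/\sqrt 6$ and $1/3$ are both less than $1$. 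So $c_{D_j} \le 1$ for every $j$, with equality possible only in the $R_I$ case when $n + m = 2$, i.e. $m = n = 1$, i.e. $D_j = \D$.

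It remains to establish the converse direction of the equivalence: if $c_D = 1$, then some $D_j$ is a disk. By \eqref{max c_D}, $c_D = 1$ forces $c_{D_j} = 1$ for some $j$. From the enumeration above, the only irreducible bounded symmetric domain with Bloch constant equal to $1$ is $\D$ itself (every other value on the list is strictly less than $1$, using the stated dimensional restrictions $n \ge 2$ in $R_{II}$ and $n \ge 5$ in $R_{III}, R_{IV}$, together with the exceptional values). Hence $D_j = \D$, as claimed.

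I do not anticipate a genuine obstacle here; the statement is essentially a bookkeeping consequence of the previously recorded Bloch-constant formulas and the product rule \eqref{max c_D}. The one point requiring slight care is making sure the dimensional restrictions are invoked correctly so that no non-disk factor sneaks in with Bloch constant $1$ — in particular one should note that the disk legitimately arises as the degenerate case $m = n = 1$ of $R_I$, so that the classification is exhaustive and the argument is not circular.
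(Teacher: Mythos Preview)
Your proposal is correct and follows essentially the same approach as the paper: inspect the explicit Bloch constants of the irreducible factors under the stated dimensional restrictions to see that each is at most $1$ with equality only for $R_I$ with $m=n=1$ (i.e.\ $\D$), and then invoke the product formula \eqref{max c_D}. Your write-up is simply a more detailed unpacking of what the paper calls ``by inspection.''
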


\begin{proof} First we consider the Bloch constant for each irreducible factor $D_j$ for $1 \leq j \leq k$.  Using the dimensional restrictions in Section \ref{section:preliminaries}, by inspection it is clear that $c_{D_j} \leq 1$ and $c_{D_j} = 1$ if and only if $D_j$ is in $R_I$ with $n=m=1$.  Thus, by (\ref{max c_D}), $c_D \leq 1$ and $c_D = 1$ if and only if there exists $j \in \{1,\dots,k\}$ such that $D_j$ is in $R_I$ with $n=m=1$.  If there exists such a $j$, then $D_j = \D$.\end{proof}

We denote by $\mathfrak{D}$ the set of bounded symmetric domains $D$ for which $c_D < 1$.  By Lemma \ref{lemma:c_D standard form}, these are precisely the bounded symmetric domains which do not have $\D$ as a factor when written in standard form.  The next result follows immediately from Remark \ref{remark:isometry norm} using induction on $k$.

\begin{lemma} Let $D$ be a bounded homogeneous domain in $\C^n$ and $\psi \in H(D)$.  If $M_\psi$ is an isometry on $\Bloch(D)$ $($or $\Bloch_{0^*}(D))$, then $M_{\psi^k}$ is an isometry on $\Bloch(D)$ $(\text{respectively, } \Bloch_{0^*}(D))$ for all $k \in \N$.  In particular, $\blochnorm{\psi^k} = 1$ for all $k \in \N$.
\end{lemma}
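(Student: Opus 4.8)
The plan is to exploit the elementary identity $M_{\psi^k}=(M_\psi)^k$, together with the fact that a composition of isometries is again an isometry, and then to read off the norm equality $\blochnorm{\psi^k}=1$ by feeding the constant function $1$ into the operator, exactly as in Remark~\ref{remark:isometry norm}. The whole statement is then an easy induction on $k$.

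First I would handle the base case: for $k=1$ there is nothing to prove, and Remark~\ref{remark:isometry norm} already gives $\blochnorm{\psi}=\blochnorm{M_\psi 1}=\blochnorm{1}=1$. For the inductive step, assume $M_{\psi^k}$ is an isometry on $\Bloch(D)$. Since pointwise multiplication is commutative and associative, for every $f\in\Bloch(D)$ we have $\psi^{k+1}f=\psi(\psi^k f)$, so $M_{\psi^{k+1}}=M_\psi M_{\psi^k}$; in particular $M_{\psi^{k+1}}$ is bounded, being a composition of bounded operators, and it is the multiplication operator with symbol $\psi^{k+1}\in H(D)$. Then, for $f\in\Bloch(D)$,
\[
\blochnorm{M_{\psi^{k+1}}f}=\blochnorm{M_\psi\bigl(M_{\psi^k}f\bigr)}=\blochnorm{M_{\psi^k}f}=\blochnorm{f},
\]
where the second equality uses that $M_\psi$ is an isometry and the third uses the inductive hypothesis. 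Hence $M_{\psi^{k+1}}$ is an isometry, and applying Remark~\ref{remark:isometry norm} to $M_{\psi^{k+1}}$ yields $\blochnorm{\psi^{k+1}}=\blochnorm{M_{\psi^{k+1}}1}=\blochnorm{1}=1$, which closes the induction.

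Finally I would remark that the argument for $\Bloch_{0^*}(D)$ is word-for-word the same: each $M_\psi$ leaves $\Bloch_{0^*}(D)$ invariant, so $M_{\psi^k}=(M_\psi)^k$ acts on $\Bloch_{0^*}(D)$, the displayed chain of equalities is unchanged, and Remark~\ref{remark:isometry norm} still applies since $1\in\Bloch_{0^*}(D)$. There is no real obstacle in this lemma; the only point deserving a word of justification is the identification of $M_{\psi^{k+1}}$ as a \emph{bounded} multiplication operator whose symbol is the product $\psi^{k+1}$, and this is immediate from the commutativity of pointwise multiplication and the boundedness (indeed, unit norm) of $M_\psi$.
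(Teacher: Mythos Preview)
Your argument is correct and is exactly the one the paper has in mind: the paper simply states that the lemma ``follows immediately from Remark~\ref{remark:isometry norm} using induction on $k$,'' which is precisely your use of $M_{\psi^{k+1}}=M_\psi M_{\psi^k}$ together with the observation $\blochnorm{\psi^k}=\blochnorm{M_{\psi^k}1}=\blochnorm{1}=1$.
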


\begin{lemma}\label{lemma:seminorm_inequality} Let $D$ be a bounded symmetric domain and $\psi \in H(D)$.  If $M_\psi$ is an isometry on $\Bloch(D)$ or $\Bloch_{0^*}(D)$, then $\beta_{\psi^k} \leq c_D$ for all $k \in \N$.  In particular, if $D \in \mathfrak{D}$, then $\beta_{\psi^k} < 1$ for all $k \in \N$.
\end{lemma}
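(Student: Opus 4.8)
The plan is to combine the preceding lemma with the elementary inequality $\supnorm{\varphi}\le\norm{M_\varphi}$, the maximum modulus principle, and the definition of the Bloch constant $c_D$.

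Fix $k\in\N$. By the preceding lemma, $M_{\psi^k}$ is an isometry on $\Bloch(D)$ (respectively $\Bloch_{0^*}(D)$), so $\norm{M_{\psi^k}}=\blochnorm{\psi^k}=1$. By Theorem \ref{norm_estimate_bloch} (equivalently, by Lemma 11 of \cite{DurenRombergShields:69}, as used in the proof of Theorem \ref{theorem:bounded operator on bloch space}), $\supnorm{\psi^k}\le\norm{M_{\psi^k}}=1$; hence $\mod{\psi(z)}\le 1$ for every $z\in D$.

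I would then argue by cases. If $\psi$ is constant, then so is $\psi^k$, and $\beta_{\psi^k}=0\le c_D$. If $\psi$ is non-constant, then $\psi^k\in H(D)$ is non-constant with $\supnorm{\psi^k}\le 1$, so the maximum modulus principle forces $\mod{\psi^k(z)}<1$ for all $z\in D$, that is, $\psi^k(D)\subseteq\D$. The definition $c_D=\sup\{\beta_f : f\in H(D),\ f(D)\subseteq\D\}$ then yields $\beta_{\psi^k}\le c_D$. Since $k\in\N$ was arbitrary, $\beta_{\psi^k}\le c_D$ for all $k$. Finally, if $D\in\mathfrak{D}$ then $c_D<1$ by the definition of $\mathfrak{D}$ (Lemma \ref{lemma:c_D standard form}), so $\beta_{\psi^k}\le c_D<1$ for all $k\in\N$.

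There is no serious obstacle here; the only delicate point is that the definition of $c_D$ involves functions mapping into the \emph{open} disk, so one must upgrade the bound $\supnorm{\psi^k}\le 1$ to the strict inclusion $\psi^k(D)\subseteq\D$. This is precisely where the maximum modulus principle enters, once the constant case has been dealt with separately.
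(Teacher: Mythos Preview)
Your proof is correct and follows essentially the same route as the paper: use $\supnorm{\psi^k}\le\norm{M_{\psi^k}}=1$, invoke the maximum modulus principle to conclude that either $\psi^k$ is a unimodular constant (hence $\beta_{\psi^k}=0$) or $\psi^k(D)\subseteq\D$, and then apply the definition of $c_D$. The only cosmetic difference is that the paper phrases the dichotomy for $\psi^k$ (unimodular constant versus mapping into $\D$) rather than for $\psi$ (constant versus non-constant), but the content is identical.
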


\begin{proof} By Lemma 11 of \cite{DurenRombergShields:69}, $\supnorm{\psi^k} \leq \norm{M_{\psi^k}} = 1$.  Thus $\psi^k$ is either a constant function of modulus one or a bounded holomorphic function mapping into $\D$.  If $\psi^k$ is a constant function of modulus one, then $\beta_{\psi^k} = 0$.  If $\psi^k$ is bounded holomorphic mapping into $\D$, then the definition of the Bloch constant of $D$ gives $\beta_{\psi^k} \leq c_D$ for all $k \geq 1$.  Thus in both cases, $\beta_{\psi^k} \leq c_D$ for all $k \in \N$.  If $D \in \mathfrak{D}$, the conclusion follows from Lemma \ref{lemma:c_D standard form}.
\end{proof}

\begin{theorem}\label{theorem:isometry_characterization} Let $D \in \mathfrak{D}$ and $\psi \in H(D)$.  Then $M_\psi$ is an isometry on $\Bloch(D)$ or $\Bloch_{0^*}(D)$ if and only if $\psi$ is a constant function of modulus one.\end{theorem}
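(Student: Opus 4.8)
The plan is to prove the nontrivial direction: assuming $M_\psi$ is an isometry on $\Bloch(D)$ (or $\Bloch_{0^*}(D)$) with $D \in \mathfrak{D}$, show $\psi$ is a unimodular constant. The previous two lemmas already give us the essential leverage: for every $k \in \N$ we have both $\blochnorm{\psi^k} = 1$ and $\beta_{\psi^k} \leq c_D$, where $c_D < 1$ since $D \in \mathfrak{D}$. Writing $\blochnorm{\psi^k} = \mod{\psi(0)}^k + \beta_{\psi^k} = 1$, we obtain $\mod{\psi(0)}^k = 1 - \beta_{\psi^k} \geq 1 - c_D > 0$ for all $k$. Since $1 - c_D$ is a fixed positive constant and $\mod{\psi(0)}^k \to 0$ if $\mod{\psi(0)} < 1$, this forces $\mod{\psi(0)} \geq 1$. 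On the other hand, $\supnorm{\psi} \leq \norm{M_\psi} = 1$ by Lemma 11 of \cite{DurenRombergShields:69} (cf. Remark~\ref{remark:isometry norm}), so $\mod{\psi(0)} \leq 1$. Hence $\mod{\psi(0)} = 1$.

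From $\mod{\psi(0)} = 1$ and $\supnorm{\psi} \leq 1$, the function $\psi$ attains the maximum of its modulus at the interior point $0$ of $D$. By the maximum modulus principle for holomorphic functions of several complex variables on the connected open set $D$, $\psi$ must be constant. Since that constant has modulus $\mod{\psi(0)} = 1$, $\psi$ is a constant function of modulus one, as claimed.

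The converse direction is immediate from Remark~\ref{remark:isometry norm}: if $\psi \equiv \lambda$ with $\mod{\lambda} = 1$, then $\beta_{M_\psi f} = \mod{\lambda}\beta_f = \beta_f$ and $\mod{(M_\psi f)(0)} = \mod{\lambda}\mod{f(0)} = \mod{f(0)}$, so $\blochnorm{M_\psi f} = \blochnorm{f}$ for every $f$, on both $\Bloch(D)$ and $\Bloch_{0^*}(D)$.

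The argument has essentially no obstacle; the real content was front-loaded into Lemmas~\ref{lemma:seminorm_inequality} and the preceding one, which established $\beta_{\psi^k} \leq c_D$. The one point deserving a word of care is the invocation of the maximum modulus principle in several variables, but this is standard: a holomorphic function on a domain in $\C^n$ whose modulus has an interior maximum is constant (apply the one-variable result along complex lines through the extremal point, or cite the standard multivariable version). It is worth noting that the hypothesis $D \in \mathfrak{D}$ is used only through $c_D < 1$; if $D$ had $\D$ as a factor, $c_D = 1$ and the inequality $\mod{\psi(0)}^k \geq 1 - c_D = 0$ would be vacuous, which is exactly why the disk factor must be excluded.
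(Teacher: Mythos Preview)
Your proof is correct and follows essentially the same approach as the paper: both use the identities $\blochnorm{\psi^k}=|\psi(0)|^k+\beta_{\psi^k}=1$ together with $\beta_{\psi^k}\le c_D<1$ to force $|\psi(0)|=1$, and then (implicitly in the paper, explicitly in your version via the maximum modulus principle) conclude that $\psi$ is a unimodular constant. The only cosmetic difference is that the paper phrases the argument by contradiction, assuming $|\psi(0)|<1$ at the outset, whereas you argue directly.
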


\begin{proof} We will prove this statement for $\Bloch(D)$ (the proof follows the same argument for $\Bloch_{0^*}(D)$).  Since the multiplication operators induced by unimodular constants are isometries, we only need to prove the converse.  Suppose $M_\psi$ is an isometry on $\Bloch(D)$ and $\psi$ is not a constant function of modulus one.  Then $\psi(0) = a$, for some $\mod{a} < 1$.  Since $\blochnorm{\psi^k} = 1$, by Lemma \ref{lemma:seminorm_inequality} we have $$\mod{a}^k = 1-\beta_{\psi^k} \geq 1-c_D,$$ so that $\mod{a}^k$ is bounded away from 0, contradicting the fact that $\mod{a}^k \to 0$ as $k \to \infty$.  Thus, if $M_\psi$ is an isometry, then $\psi$ must be a constant function of modulus one.
\end{proof}

\section{Open Questions}
Let $D$ be a bounded homogeneous domain.

\begin{enumerate}
\item If $\psi \in H(D)$ induces a bounded multiplication operator on $\Bloch(D)$, are the upper bounds of Theorem \ref{norm_estimate_bloch} sharp?

\item If $D$ is not conformally equivalent to the unit ball and $\rho$ is the Bergman distance on $D$, is $\omega(z) = \rho(0,z)$ for each $z \in D$?

\item If $D$ is a bounded homogeneous domain other than the ball:
\begin{enumerate}
\item Is $\omega(z) = \omega_0(z)$ for each $z \in D$?
\item Is $\sigma_\psi \asymp \sigma_{0,\psi}$ for some $\psi \in H(D)$?  If so, which functions $\psi$ satisfy this property?
\item Is $\displaystyle\lim_{z \to \distbd{D}} \omega_0(z) = \infty$?
\end{enumerate}
\end{enumerate}

\begin{remark} Theorem \ref{theorem:isometry_characterization} pertains to bounded symmetric domains which do not have $\D$ as a factor, and so it does not apply to the case of the Bloch space on the unit disk.  However, in \cite{AllenColonna:08}, the authors showed that even in this case the isometric multiplication operators on the Bloch space of $\D$ are precisely those operators whose symbol is a constant function of modulus one.  This result was known to N. Zorboska, R. Zhao and Z. Cuckovic. Furthermore, by Corollary~\ref{constantsymbol}, even in the case of the unit polydisk, the only isometric multiplication operators are those whose symbol is a constant of modulus one. Thus we end this paper with the following conjecture.
\end{remark}

\begin{conjecture}Let $D$ be a bounded homogeneous domain.  Then $M_\psi$ is an isometry on $\Bloch(D)$ or $\Bloch_{0^*}(D)$ if and only if $\psi$ is a constant function of modulus one.\end{conjecture}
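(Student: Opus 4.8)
Our plan is to strengthen the proof of Theorem~\ref{theorem:isometry_characterization} so that the hypothesis $c_D<1$ becomes unnecessary, by extracting extra decay from the powers $\psi^k$. Suppose $M_\psi$ is an isometry on $\Bloch(D)$ (the argument for $\Bloch_{0^*}(D)$ will be the same, since everything below involves only $\psi$). If $\psi$ is constant, then $\blochnorm{\psi}=\mod{\psi(0)}=1$ and $\psi$ is a unimodular constant; so we assume $\psi$ is non-constant and seek a contradiction. By Remark~\ref{remark:isometry norm}, $\blochnorm{\psi}=\norm{M_\psi}=1$, and by Lemma~11 of \cite{DurenRombergShields:69}, $\supnorm{\psi}\le\norm{M_\psi}=1$; since $\psi$ is non-constant, the maximum modulus principle gives $\psi(D)\subseteq\D$. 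As in the proof of Theorem~\ref{theorem:isometry_characterization}, $M_{\psi^k}$ is an isometry for every $k\in\N$, so $\blochnorm{\psi^k}=\mod{\psi(0)}^k+\beta_{\psi^k}=1$. It therefore suffices to bound $\beta_{\psi^k}$ away from $1$ uniformly in $k\ge2$: that forces $\mod{\psi(0)}=1$, hence $\beta_\psi=\blochnorm{\psi}-\mod{\psi(0)}=0$, contradicting non-constancy.

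The crucial step is a Schwarz--Pick-type estimate. Fix $z\in D$ and let $m$ be the automorphism of $\D$ carrying $\psi(z)$ to $0$; then $h:=m\circ\psi$ maps $D$ into $\D$ with $h(z)=0$, so $\beta_h\le c_D$ by the definition of the Bloch constant. The chain rule yields $Q_h(z)=\mod{m'(\psi(z))}\,Q_\psi(z)=Q_\psi(z)/(1-\mod{\psi(z)}^2)$, and since $Q_h(z)\le\beta_h\le c_D$ we obtain $Q_\psi(z)\le c_D(1-\mod{\psi(z)}^2)$ for every $z\in D$. Hence
$$\beta_{\psi^k}=\sup_{z\in D}\,k\mod{\psi(z)}^{k-1}Q_\psi(z)\le c_D\,g_k,\qquad g_k:=\max_{0\le t\le1}kt^{k-1}(1-t^2)=\frac{2k}{k+1}\Bigl(\frac{k-1}{k+1}\Bigr)^{(k-1)/2}.$$
A routine calculation shows $g_k$ is decreasing for $k\ge2$ with $\sup_{k\ge2}g_k=g_2=\tfrac{4\sqrt3}{9}<1$ (and $g_k\to2/e$). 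When $D$ is a bounded symmetric domain, $c_D\le1$ by Lemma~\ref{lemma:c_D standard form}, so $\beta_{\psi^k}\le g_2<1$ for all $k\ge2$; then $\mod{\psi(0)}^k=1-\beta_{\psi^k}\ge1-g_2>0$, which is impossible when $\mod{\psi(0)}<1$ because $\mod{\psi(0)}^k\to0$. This yields $\mod{\psi(0)}=1$, the desired contradiction, and in particular settles the conjecture for the polydisk and recovers the disk case of \cite{AllenColonna:08}.

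For an arbitrary bounded homogeneous domain the same proof works verbatim as soon as $c_D\le1$; in fact $c_D<e/2$ already suffices, since $g_k\to2/e$ gives $\limsup_k c_Dg_k<1$, and one may then conclude by letting $k\to\infty$ in $\mod{\psi(0)}^k=1-\beta_{\psi^k}$. We expect the main obstacle to be proving $c_D\le1$ for every bounded homogeneous domain: Lemma~\ref{lemma:c_D standard form} establishes it only through Cartan's classification, which says nothing about the non-symmetric bounded homogeneous domains, the first of which occurs in $\C^4$. A classification-free proof would presumably compare the Bergman metric of $D$ (normalized as in (\ref{equation:Q_f}), so that the disk carries $|dz|^2/(1-|z|^2)^2$) with the Carath\'eodory metric, or restrict a map $\psi\colon D\to\D$ to complex geodesic discs through each point and invoke the classical Schwarz--Pick lemma. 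Granting such a bound, the scheme above proves the conjecture in full; everything else is routine.
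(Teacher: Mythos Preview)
The statement you are addressing is presented in the paper as an open \emph{conjecture}; the paper contains no proof of it. Theorem~\ref{theorem:isometry_characterization} is the paper's best result in this direction, and it only covers bounded symmetric domains with $c_D<1$, while the disk case is handled by a separate argument in \cite{AllenColonna:08}. There is therefore no ``paper's own proof'' to compare against.

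Your proposal is nonetheless a genuine advance over what the paper proves. The key new idea is the sharpened Schwarz--Pick bound $Q_\psi(z)\le c_D(1-\mod{\psi(z)}^2)$, obtained by post-composing with the disk automorphism sending $\psi(z)$ to $0$; this is correct and yields $\beta_{\psi^k}\le c_D\,g_k$ with $g_k=\max_{0\le t\le1}kt^{k-1}(1-t^2)$. Since $g_k\le g_2=4\sqrt3/9<1$ for $k\ge 2$, you get $\beta_{\psi^k}$ uniformly bounded away from $1$ whenever $c_D\le 1$. By Lemma~\ref{lemma:c_D standard form} this holds for \emph{every} bounded symmetric domain, so your argument cleanly proves the conjecture for the full symmetric class---in particular for the polydisk and the disk---unifying Theorem~\ref{theorem:isometry_characterization} with the separate disk argument of \cite{AllenColonna:08}. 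This is strictly stronger than what the paper establishes.

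For an arbitrary bounded homogeneous domain, however, the conjecture remains open under your scheme, and you correctly locate the gap: you need $c_D\le 1$ (or at least $c_D<e/2$), and nothing in the paper or in \cite{CohenColonna:94} supplies that bound outside the symmetric case. The quantity $c_D$ is exactly the supremum over $D$ of the ratio of the Carath\'eodory infinitesimal metric to the Bergman metric (in the normalization of~(\ref{equation:Q_f})), and while this ratio is well understood for symmetric domains via Cartan's classification, for the non-symmetric bounded homogeneous domains (which first appear in $\C^4$) no such computation is available. Your sketch of restricting to complex geodesic disks is natural, but it would require knowing that Bergman geodesic disks in $D$ are totally geodesic and isometric to $\D$ in the given normalization, which is again a symmetric-space phenomenon. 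So as it stands, your argument settles the conjecture for bounded symmetric domains and reduces the general case to the inequality $c_D<e/2$, but does not close it.
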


\bibliographystyle{amsplain}
\bibliography{references.bib}
\end{document}